\numberwithin{equation}{section}
\theoremstyle{plain}
\newtheorem{theorem}{Theorem}[section]
\newtheorem{proposition}[theorem]{Proposition}
\newtheorem{lemma}[theorem]{Lemma}
\theoremstyle{remark}
\newtheorem{remark}[theorem]{Remark}
\newtheorem{example}[theorem]{Example}
\theoremstyle{definition}
\newsavebox{\framedbox}
\renewcommand{\AA}{\mathcal{A}}
\newcommand{\BB}{\mathcal{B}}
\newcommand{\QQ}{\mathcal{Q}}
\newcommand{\R}{\mathbb{R}}
\newcommand{\Q}{\mathbb{Q}}
\newcommand{\N}{\mathbb{N}}
\newcommand{\iii}{\mathtt{i}}
\newcommand{\jjj}{\mathtt{j}}
\newcommand{\ccc}{\mathtt{c}}
\newcommand{\eps}{\varepsilon}
\renewcommand{\atop}[2]{\genfrac{}{}{0pt}{}{#1}{#2}}
\newenvironment{labeledlist}[2][\unskip]
{ 
  
  \begin{enumerate} }
{ \end{enumerate} }
\DeclareMathOperator{\esssup}{ess\,sup}
\DeclareMathOperator{\essinf}{ess\,inf}
\DeclareMathOperator{\udimloc}{\overline{dim}_{loc}}
\DeclareMathOperator{\ldimloc}{\underline{dim}_{loc}}
\DeclareMathOperator{\dimh}{dim_H}
\DeclareMathOperator{\udimh}{\overline{dim}_H}
\DeclareMathOperator{\ldimh}{\underline{dim}_H}
\DeclareMathOperator{\dimp}{dim_p}
\DeclareMathOperator{\udimp}{\overline{dim}_p}
\DeclareMathOperator{\ldimp}{\underline{dim}_p}
\DeclareMathOperator{\dimb}{dim_B}
\DeclareMathOperator{\diam}{diam}
\DeclareMathOperator{\spt}{spt}
\begin{document}

\title{Local dimensions in Moran constructions}

\author{Antti K\"aenm\"aki}
\address{Department of Mathematics and Statistics \\
         P.O.\ Box 35 (MaD) \\
         FI-40014 University of Jyv\"askyl\"a \\
         Finland}
\email{antti.kaenmaki@jyu.fi}

\author{Bing Li}
\address{Department of Mathematics \\
         South China University of Technology \\
Guangzhou, 510641, P.R.\ China}
\email{scbingli@scut.edu.cn}

\author{Ville Suomala}
\address{Department of Mathematical Sciences \\
         P.O.\ Box 3000 \\
         FI-90014 University of Oulu \\
         Finland}
\email{ville.suomala@oulu.fi}

\thanks{
BL is the corresponding author. AK and BL thank the  mobility grant of the Academy of Finland and the NSFC 11411130372.
BL was supported by the NSFC 11371148 and 11201155, and "Fundamental Research Funds for the Central Universities" SCUT(2015ZZ055). VS acknowledges support from the Academy of Finland. }
\subjclass[2000]{Primary 28A12, 28A80; Secondary 28A78, 54E50}
\keywords{Moran construction, doubling metric spaces, Hausdorff dimension, packing dimension, $L^q$-dimension}
\date{\today}

\begin{abstract}
  We study the dimensional properties of Moran sets and Moran measures in doubling metric spaces. In particular, we consider local dimensions and $L^q$-dimensions. We generalize and extend several existing results in this area.
\end{abstract}

\maketitle

\section{Introduction} \label{sec:intro}

The concepts of a Moran set and a Moran construction arise naturally in the study of fractal sets. These concepts extend the model case of self-similar sets and more general graph-directed self-conformal sets by allowing more flexible constructions both in terms of the placement of the basic sets and in that the contraction ratios may vary from step to step. Moran sets have found applications e.g.\ in the study of the structure of the spectrum of quasicrystals and multifractal analysis (see \cite{LiuWen2005}, \cite{FFW2001} and the references therein).

Dimensional properties of various Moran sets and measures have been under extensive study in the real-line and higher dimensional Euclidean spaces; see e.g.\ \cite{FengWenWu1997, HollandZhang2013, HuaRaoWenWu2000, KaenmakiVilppolainen2008, LiWu2011, LiuWen2005, Roychowdhury2010, Roychowdhury2011, WuXiao2011, XueKamae2013} for few aspects of this research line. In non-Euclidean spaces, it is often impossible to construct non-trivial self-similar or self-conformal sets. Nevertheless, in metric spaces that are geometrically doubling, under mild regularity assumptions, it is always possible to define Moran sets by using Moran constructions that are slightly more flexible than the ``standard'' constructions found in the above references. In particular, this means that we must allow fluctuations in the sizes of the basic sets that are ``logarithmically negligible''. Thus, the concepts of Moran sets and measures are natural in these spaces. Apart from \cite{BaloghRohner2007, BaloghTysonWarhurst2009, KaenmakiRajalaSuomala2013, RajalaVilppolainen2011}, where certain classes of self-similar and specific Moran sets and measures are considered, we do not know any attempts to study general Moran constructions in metric spaces.

Our purpose in this note is to generalize some fundamental dimensional results of Moran sets and measures from Euclidean spaces to doubling metric spaces. The paper is organized as follows: In Section \ref{sec:notation}, we set up some notation and recall few auxiliary results. In Section \ref{sec:filtrations} we present a method to calculate local dimensions of measures using a relaxed notion of mesh cubes in the metric setting. The dimensional results for various Moran sets and measures are presented in Section \ref{sec:moran}.

\section{Notation and preliminaries} \label{sec:notation}

We work on a complete metric space $(X,d)$ which is assumed to be \emph{doubling}. This means that there is an integer $N$ (\emph{a doubling constant of} $X$) such
that any closed ball $B(x,r) = \{ y
\in X : d(x,y) \le r \}$ with center $x \in X$ and radius $r>0$ can be
covered by $N$ balls of radius $r/2$.
Notice that, although $x \ne y$ or $r\neq t$, it may happen that
$B(x,r)=B(y,t)$. For simplicity, we henceforth assume that each ball comes with a fixed center and radius.
This makes it possible to use notation such as
$2B=B(x,2r)$ without referring to the centre or radius of the ball $B=B(x,r)$. 

We call any countable collection $\BB$ of pairwise disjoint closed
balls a \emph{packing}. It is called a \emph{packing of $A$} for a
subset $A \subset X$ if the centers of the balls of $\BB$ lie in the
set $A$, and it is a \emph{$\delta$-packing} for $\delta > 0$ if all
of the balls in $\BB$ have radius $\delta$. A $\delta$-packing $\BB$
of $A$ is termed \emph{maximal} if for every $x \in A$ there is $B \in
\BB$ so that $B(x,\delta) \cap B \ne \emptyset$. Note that if $\BB$ is
a maximal $\delta$-packing of $A$, then $2\BB$ covers $A$. Here $2\BB
= \{ 2B : B \in \BB \}$. Observe that doubling
metric spaces are always separable. Hence for each
$\delta>0$ and $A \subset X$ there exists a maximal $\delta$-packing
of $A$.

The doubling property can be stated in several equivalent ways. We recall the following basic covering lemma that is valid for doubling metric spaces \cite{Heinonen2001}:

\begin{lemma} \label{thm:covering_thm}
  For a doubling metric space $X$:
\begin{enumerate}
  \item There are $s>0$ and $c>0$ such that if $0<r<R$ and $\BB$ is an $r$-packing of a closed ball of radius $R$, then the cardinality of $\BB$ is at most $c(r/R)^{-s}$. \label{covering3}
  \item There is $M \in \N$ such that if $A \subset X$ and $\delta>0$, then there are $\delta$-packings of $A$, $\BB_1,\ldots,\BB_M$, whose union covers $A$. \label{covering5}
\end{enumerate}
\end{lemma}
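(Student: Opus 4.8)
The plan is to obtain both parts by iterating the doubling property, deducing \eqref{covering5} from the multiplicity bound furnished by \eqref{covering3}. Enlarging the doubling constant if necessary, we may assume $N\ge 2$; put $s=\log_2 N$, so $s>0$.

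For \eqref{covering3}, start from an $r$-packing $\BB$ of $B(x_0,R)$ with $0<r<R$. Its members are pairwise disjoint closed balls of radius $r$, so distinct centres are at distance strictly greater than $r$, and all of them lie in $B(x_0,R)$. Iterating the doubling property $k$ times covers $B(x_0,R)$ by at most $N^{k}$ balls of radius $2^{-k}R$; taking $k$ to be the least integer with $2^{-k}R\le r/2$ gives $k\le 2+\log_2(R/r)$, and since each of these covering balls has diameter at most $r$ it contains at most one centre of $\BB$. Hence $\#\BB\le N^{k}\le N^{2}(R/r)^{s}$, which is \eqref{covering3} with $c=N^{2}$.

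For \eqref{covering5}, fix $A\subset X$ and $\delta>0$ and (e.g.\ by Zorn's lemma) choose a subset $S\subset A$ that is maximal with respect to the property that distinct points of $S$ lie at distance strictly greater than $\delta$; maximality then forces the closed balls $B(x,\delta)$, $x\in S$, to cover $A$. I would then peel off packings greedily: set $S_{0}=S$ and, while $S_{k-1}\ne\emptyset$, let $\BB_{k}$ be a maximal pairwise disjoint subfamily of $\{B(x,\delta):x\in S_{k-1}\}$ and let $S_{k}$ consist of those points of $S_{k-1}$ whose ball was not selected. If $x\in S_{M}$, then for each $k\le M$ the maximality of $\BB_{k}$ forces $B(x,\delta)$ to meet some selected ball $B(y_{k},\delta)$, so $d(x,y_{k})\le 2\delta$; the points $y_{1},\dots,y_{M}$ are pairwise distinct, belong to $S$, and differ from $x$, so $\{x,y_{1},\dots,y_{M}\}$ is a set of $M+1$ points of $B(x,2\delta)$ lying pairwise more than $\delta$ apart. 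Their balls of radius $\delta/2$ are then disjoint and contained in $B(x,5\delta/2)$, so \eqref{covering3} bounds their number by $c\,5^{s}$; hence, choosing $M=\lceil c\,5^{s}\rceil$, the set $S_{M}$ must be empty. Consequently every $x\in S$ is selected by round $M$, so $\BB_{1},\dots,\BB_{M}$ (filled out with empty families if the recursion stops earlier) are $\delta$-packings of $A$ whose union covers $A$.

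I do not anticipate a genuine obstacle, since this is a standard property of geometrically doubling spaces; the step requiring the most care is \eqref{covering5}, and the delicacies are purely metric bookkeeping. Disjointness of two closed balls of radius $\rho$ only guarantees that their centres are more than $\rho$ apart (not $2\rho$), which is why the covering balls in \eqref{covering3} are taken of radius $r/2$ and the auxiliary balls in \eqref{covering5} of radius $\delta/2$; and one must verify that the greedy recursion terminates after a number of rounds bounded by a constant depending only on $N$ --- equivalently, that the ``intersection graph'' on the net $S$ has uniformly bounded degree --- which is precisely what \eqref{covering3} supplies.
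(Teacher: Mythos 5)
Your proof is correct. There is nothing in the paper to compare it against: the authors do not prove this lemma, but simply quote it from \cite{Heinonen2001} as a standard fact about doubling metric spaces. What you have written is essentially the canonical argument. For part (1), iterating the doubling property down to scale $r/2$ and observing that disjointness of closed $r$-balls forces distinct centres to lie at distance strictly greater than $r$ (so each covering ball of diameter at most $r$ captures at most one centre) is exactly right, and yields $s=\log_2 N$, $c=N^2$. For part (2), your greedy colouring of the $\delta$-net $S$ is the standard ``bounded degree of the intersection graph'' argument, and the bookkeeping is sound: the points $y_1,\dots,y_M$ are pairwise distinct because $y_k$ leaves $S_k$ while $x$ and all later $y_{k'}$ remain in it, and the $(\delta/2)$-balls around the $(>\delta)$-separated set $\{x,y_1,\dots,y_M\}\subset B(x,2\delta)$ form a packing to which part (1) applies, giving $M+1\le c\,5^s$ (applying part (1) with $R=2\delta$ would give the marginally better bound $c\,4^s$, but this is immaterial). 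The only point worth adding, purely for compatibility with the paper's definition of a packing as a \emph{countable} collection, is that a $\delta$-separated subset of a doubling (hence separable) metric space is automatically countable, so your maximal set $S$ and each family $\BB_k$ are indeed countable.
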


In this article \emph{a measure} exclusively refers to a nontrivial Borel regular (outer) measure
defined on all subsets of $X$ so that bounded sets have finite
measure. The support of a measure $\mu$, denoted by $\spt(\mu)$, is
the smallest closed subset of $X$ with full $\mu$-measure.

The Hausdorff, packing, and box-counting dimensions of a set $E\subset X$ will be denoted by $\dimh(E), \dimp(E)$, and $\dimb(E)$, respectively. We will consider the \emph{upper and lower local dimensions of a measure $\mu$} defined at each $x\in\spt(\mu)$ by
\begin{align*}
  \udimloc(\mu,x) &= \limsup_{r \downarrow 0} \frac{\log\mu(B(x,r))}{\log r}, \\
  \ldimloc(\mu,x) &= \liminf_{r \downarrow 0} \frac{\log\mu(B(x,r))}{\log r},
\end{align*}
respectively. We also define \emph{upper and lower Hausdorff and packing dimensions of $\mu$} by setting
\begin{align*}
  \udimh(\mu) &= \esssup_{x \sim \mu} \ldimloc(\mu,x), \\
  \ldimh(\mu) &= \essinf_{x \sim \mu} \ldimloc(\mu,x), \\
  \udimp(\mu) &= \esssup_{x \sim \mu} \udimloc(\mu,x), \\
  \ldimp(\mu) &= \essinf_{x \sim \mu} \udimloc(\mu,x),
\end{align*}
where $x\sim\mu$ means that $x$ is distributed according to $\mu$.
The reader is referred to \cite{Falconer1997} for the basic properties of Hausdorff and packing, as well as box-counting dimensions. In particular, we recall the following relations between the dimensions of sets and measures supported on them: For a non-empty Borel set $E\subset X$, it is well known that certain metric space versions of Frostman's lemma (see \cite{Howroyd1995, JoycePreiss1995, Falconer1997}) imply that
\begin{align}
\dimh(E)=\sup_{\spt(\mu)\subset E}\udimh(\mu)=\sup_{\spt(\mu)\subset E}\ldimh(\mu)\,,\label{Frostman:Hausdorff}\\
\dimp(E)=\sup_{\spt(\mu)\subset E}\udimp(\mu)=\sup_{\spt(\mu)\subset E}\ldimp(\mu)\,.\label{Frostman:packing}
\end{align}

Let $\mu$ be a measure on
$X$, $q \ge 0$, and $r>0$. For a compact set $A \subset X$, we write
\begin{equation*}
  S_q(\mu,A,\delta) = \sup\Bigl\{ \sum_{B \in \BB} \mu(B)^q :
  \BB \text{ is a $\delta$-packing of } A \cap \spt(\mu) \Bigr\}
\end{equation*}
(with the usual interpretation that $0^q=0$ for $q=0$) and define the \emph{global and local $L^q$-spectrums of $\mu$} to be
\begin{align*}
  \tau_q(\mu) &= \liminf_{\delta \downarrow 0}
  \frac{\log S_q(\mu,X,\delta)}{\log\delta}, \\
  \tau_q(\mu,x) &= \lim_{r \downarrow 0} \liminf_{\delta \downarrow 0}
  \frac{\log S_q(\mu,B(x,r),\delta)}{\log\delta},
\end{align*}
respectively, for all $q \ge 0$.

For $q \ne 1$ we define the \emph{global and local $L^q$-dimensions of $\mu$} by setting
\begin{align*}
  \dim_q(\mu) &= \frac{\tau_q(\mu)}{q-1}, \\
  \dim_q(\mu,x) &= \frac{\tau_q(\mu,x)}{q-1},
\end{align*}
respectively. We note that in order for the global notions to make sense, we assume that $\spt(\mu)$ is bounded.

It is straightforward to see that the mapping $q\mapsto\dim_q(\mu,x)$ is continuous and decreasing on $(0,1)\cup(1,\infty)$ for all $x \in \spt(\mu)$; see \cite[Lemma 2.7]{KaenmakiRajalaSuomala2012}. The interesting behaviour of the mapping is at $1$. This is described in the following proposition which in the present metric setting is proved in \cite[Theorem 3.1]{KaenmakiRajalaSuomala2013}. Euclidean versions of this result can be found in many earlier works, see e.g. \cite{Heurteaux1998}.

\begin{proposition} \label{thm:dim-at-one}
  Suppose that $\mu$ is a measure on a complete doubling metric space $X$. Then
  \begin{equation*}
    \lim_{q \downarrow 1} \dim_q(\mu,x) \le \ldimloc(\mu,x) \le \udimloc(\mu,x) \le \lim_{q \uparrow 1} \dim_q(\mu,x)
  \end{equation*}
  for $\mu$-almost all $x \in X$.
\end{proposition}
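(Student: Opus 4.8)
The plan is to prove the two outer inequalities; the middle one is trivial. By symmetry of the argument it suffices to handle, say, the right-hand inequality $\udimloc(\mu,x) \le \lim_{q\uparrow 1}\dim_q(\mu,x)$, the other one being entirely analogous with the roles of $q<1$ and $q>1$ interchanged. The starting observation is that $\dim_q(\mu,x)$ is governed by sums $\sum_{B\in\BB}\mu(B)^q$ over $\delta$-packings of balls $B(y,\delta)$ near $x$, whereas the local dimension is governed by the single ball $\mu(B(x,r))$ behaving like $r^{\udimloc(\mu,x)}$ along a suitable sequence $r\to 0$. So the heart of the matter is a Besicovitch/Vitali-type covering argument that compares a single ball of radius $\delta$ centred at $x$ with a maximal $\delta$-packing of a larger ball $B(x,r)$; Lemma \ref{thm:covering_thm} supplies the quantitative control on the number and overlap of the balls involved.

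The key steps, for the inequality $\udimloc(\mu,x)\le\lim_{q\uparrow1}\dim_q(\mu,x)=:d$, are as follows. First, fix $q\in(0,1)$ and reduce to a statement valid for $\mu$-a.e.\ $x$: since $\dim_q(\mu,x)$ is measurable in $x$ and, for each fixed small $r$, the quantity $\liminf_{\delta\downarrow 0}\log S_q(\mu,B(x,r),\delta)/\log\delta$ can be estimated below by $(q-1)(\dim_q(\mu,x)+\eps)$ on a set of full measure. Second — the crucial elementary inequality — for a $\delta$-packing $\BB$ of $A\cap\spt\mu$ with $q<1$, concavity of $t\mapsto t^q$ and the bound on $\#\BB$ from Lemma \ref{thm:covering_thm}\eqref{covering3} give
\begin{equation*}
  S_q(\mu,A,\delta) \le (\#\BB)^{1-q}\Bigl(\sum_{B\in\BB}\mu(B)\Bigr)^q \le \bigl(c(\delta/R)^{-s}\bigr)^{1-q}\,\mu(A')^q,
\end{equation*}
where $A'$ is a fixed bounded enlargement of $A$ (using that the balls of $2\BB$ have bounded overlap, again by doubling); here $R=\diam A$. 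Third, specialise $A=B(x,r)$ and, along a sequence $r_k\downarrow 0$ realising $\udimloc(\mu,x)$, insert a lower bound of the form $\mu(B(x,r_k))\le r_k^{\,\udimloc(\mu,x)-\eps}$ into the middle of the chain; taking logarithms, dividing by $\log\delta$ (which is negative), and sending $\delta\downarrow 0$ with $r=r_k$ fixed yields
\begin{equation*}
  \tau_q(\mu,x) \ge q\bigl(\udimloc(\mu,x)-\eps\bigr),
\end{equation*}
after the $(\delta/R)^{-s(1-q)}$ factor is absorbed into the $\liminf$ as $\delta\to 0$ (its contribution to $\log S_q/\log\delta$ is $-s(1-q)$, which is swallowed once we also let $q\uparrow 1$). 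Dividing by $q-1<0$ flips the inequality to $\dim_q(\mu,x)\ge q(\udimloc(\mu,x)-\eps)/(q-1)$...

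Here I pause, because that is the wrong direction — so the actual route must be the mirror one: one bounds $S_q$ from \emph{below} by testing against a single near-maximal packing whose masses one controls from \emph{below} using the definition of $\udimloc$. Concretely, for $q>1$ pick, along $r_k\downarrow0$, a ball $B(x,r_k)$ with $\mu(B(x,r_k))\ge r_k^{\udimloc(\mu,x)+\eps}$ for infinitely many $k$, include $\{B(x,r_k)\}$ (or rather a $\delta$-packing refining a maximal one) as a competitor in the supremum defining $S_q(\mu,B(x,2r_k),\delta)$ with $\delta=r_k$, getting $S_q\ge\mu(B(x,r_k))^q\ge r_k^{q(\udimloc+\eps)}$; then $\tau_q(\mu,x)\le q(\udimloc(\mu,x)+\eps)$ after dividing by $\log r_k$ and taking the appropriate limits, hence $\dim_q(\mu,x)\le\frac{q}{q-1}(\udimloc(\mu,x)+\eps)$, and letting $q\uparrow1$ the factor $\frac{q}{q-1}\to+\infty$ — again degenerate.

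So the honest statement is that neither crude inequality alone suffices, and the genuine content — which is exactly why this is a \emph{proposition} imported from \cite{KaenmakiRajalaSuomala2013} rather than a one-line remark — is a Borel--Cantelli / energy argument: one shows that for $\mu$-a.e.\ $x$ and every $\eps>0$, $\mu(B(x,r))\le r^{d+\eps}$ fails only for $r$ in a set that is summable along geometric scales, by estimating $\int S_q(\mu,X,\delta)\,\mathrm{d}(\text{something})$ or, more precisely, by a maximal-function bound showing $\mu\{x: \mu(B(x,r))\ge r^{d+\eps}\text{ for some }r\in[\delta,2\delta]\}$ decays geometrically in $\delta$ as a consequence of $\tau_q(\mu)/(q-1)\to d$ with a Chebyshev/Markov step applied to $\sum\mu(B)^q$. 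The \emph{main obstacle}, then, is this Borel--Cantelli step: converting the $\liminf_\delta$ in the definition of $\tau_q$ into an almost-sure statement about all small $r$ simultaneously requires summing over a geometric sequence of scales $\delta_n=2^{-n}$ and controlling the measure of the exceptional set at each scale by $S_q(\mu,X,\delta_n)$ via a covering lemma (Lemma \ref{thm:covering_thm}\eqref{covering5} to pass from a maximal packing to a cover), which forces the $\eps$-loss and explains why one recovers $\dim_q$ with a limit $q\to1$ rather than $q=1$ on the nose. I would therefore structure the write-up as: (i) the elementary packing/concavity estimate above, stated cleanly for $q<1$ and $q>1$; (ii) a Chebyshev-type bound on the exceptional set at a single dyadic scale; (iii) Borel--Cantelli over dyadic scales; (iv) let $q\to1$ and $\eps\to0$. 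Steps (i), (iii), (iv) are routine; step (ii), the passage from a supremum over packings to pointwise control of $\mu(B(x,r))$ using doubling, is where the real work lies.
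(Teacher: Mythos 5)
First, a point of reference: the paper does not prove this proposition at all; it is imported verbatim from \cite[Theorem 3.1]{KaenmakiRajalaSuomala2013} (with Euclidean ancestors such as \cite{Heurteaux1998}), so the comparison must be with the argument there. Measured against that, your submission is a proof \emph{plan}, not a proof. Its first two thirds are two attempts that you yourself correctly diagnose as degenerate (the concavity bound $S_q\le(\#\BB)^{1-q}(\sum_{B}\mu(B))^q$ and the single-ball competitor both produce factors that blow up as $q\to1$), and the final paragraph announces a Borel--Cantelli scheme while explicitly deferring ``step (ii), the passage from a supremum over packings to pointwise control of $\mu(B(x,r))$'' --- which is precisely the step that carries all the content. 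Naming the hard step without performing it is a genuine gap.

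Moreover, the one concrete description you give of that step points the wrong way. For $\udimloc(\mu,x)\le\dim_q(\mu,x)$ with $q<1$, the exceptional set at scale $\delta$ is where the measure is too \emph{small}, namely $A_\delta^t=\{x:\mu(B(x,\delta))\le\delta^t\}$ with $t>\dim_q(\mu)$; you instead propose to bound $\mu\{x:\mu(B(x,r))\ge r^{d+\eps}\}$, which is the exceptional set for the \emph{other} inequality (the case $q>1$ and the lower local dimension). The correct Chebyshev step is short: cover $A_\delta^t\cap\spt(\mu)$ by the union of at most $M$ $\delta$-packings $\BB_1,\dots,\BB_M$ of it (Lemma \ref{thm:covering_thm}(\ref{covering5})); each $B\in\BB_j$ satisfies $\mu(B)\le\delta^t$, hence $\mu(B)=\mu(B)^{q}\mu(B)^{1-q}\le\delta^{t(1-q)}\mu(B)^{q}$, so that $\mu(A_\delta^t)\le M\delta^{t(1-q)}S_q(\mu,X,\delta)\le M\delta^{t(1-q)+\tau_q(\mu)-\eps}$ for \emph{all} small $\delta$ --- the $\liminf$ in the definition of $\tau_q$ is exactly what makes this hold at every small scale, hence summable over $\delta=2^{-n}$ whenever $t>\dim_q(\mu)$, and Borel--Cantelli gives $\udimloc(\mu,x)\le t$ almost everywhere. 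Finally, your plan does not address localization: the argument above bounds $\udimloc$ by the \emph{global} $\dim_q(\mu)$, and obtaining the pointwise bound by $\dim_q(\mu,x)$ requires running it on restrictions of $\mu$ to small balls (or on level sets of $x\mapsto\dim_q(\mu,x)$), as is done in \cite{KaenmakiRajalaSuomala2013}. In short: the final strategy you sketch is the right one, but the decisive estimate is absent and, as described, inverted.
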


\section{Local dimensions via general filtrations}\label{sec:filtrations}

We shall now introduce standing assumptions and notation for a specific collection of Borel sets which we call a general filtration. We assume that there are two decreasing sequences of positive real numbers $(\delta_n)_{n \in \N}$ and $(\gamma_n)_{n \in \N}$ such that
\begin{labeledlist}{F}
  \item $\delta_n \le \gamma_n$ for all $n \in \N$, \label{F-smaller}
  \item $\lim_{n \to \infty} \gamma_n = 0$, \label{F-gammalim}
  \item $\lim_{n \to \infty} \log\delta_n/\log\delta_{n+1} = 1$, \label{F-deltalim}
  \item $\lim_{n \to \infty} \log\gamma_n/\log\delta_n = 1$. \label{F-gammadeltalim}
\end{labeledlist}
For each $n\in\mathbb{N}$, let $\QQ_n$ be a collection of disjoint Borel subsets of $X$ such that each $Q\in\QQ_n$ contains a ball $B_Q$ of radius $\delta_n$, and is contained in a ball of radius $\gamma_n$. We define
$$
  E=\bigcap_{n=1}^\infty\bigcup_{Q\in\QQ_n}Q
$$
and refer the collection $\{ \QQ_n \}_{n \in \N}$ to as the \emph{general filtration of $E$}. For every $x\in E$ and $n\in\N$ there exists a unique $Q\in\QQ_n$ containing $x$, denoted by $Q_n(x)$. Furthermore, if $x \in E$ and $r>0$, then we set $\QQ_n(x,r) = \{ Q \in \QQ_n : Q \cap B(x,r) \ne \emptyset \}$.

Perhaps the most classical example of a general filtration is the dyadic filtration of the Euclidean space. It is worth noting that in doubling metric spaces, it is also possible to define nested partitions quite analogous to the dyadic cubes. A variety of such constructions have appeared in quite different instances. See e.g.\ \cite{Christ1990, AimarBernardisBibiana2007} and the references in \cite{KaenmakiRajalaSuomala2012a}. The  $\delta$-partitions, considered in \cite{KaenmakiRajalaSuomala2013}, are examples of general filtrations which lack the nested structure present in the dyadic constructions.

In the following, we show that local dimensions and $L^q$-dimensions can be equivalently defined via general filtrations. These are variants of \cite[Propositions 3.1 and 3.2]{KaenmakiRajalaSuomala2013} suited for our purposes.

\begin{proposition} \label{partitionlemma}
  Let $X$ be a complete doubling metric space and $\mu$ a non-atomic measure supported on $E$. Then
  \begin{align}
    \udimloc(\mu,x) &= \limsup_{n \to \infty} \frac{\log\mu(Q_n(x))}{\log\delta_n}, \label{dimupart} \\
    \ldimloc(\mu,x) &= \liminf_{n \to \infty} \frac{\log\mu(Q_n(x))}{\log\delta_n}, \label{dimlpart}
  \end{align}
  for $\mu$-almost all $x \in E$.
\end{proposition}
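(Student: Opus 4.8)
The plan is to split the equalities \eqref{dimupart}--\eqref{dimlpart} into the ``$\ge$'' directions, which are elementary and hold at every point of $E$, and the ``$\le$'' directions, which need a Borel--Cantelli argument. Throughout write $g_n(x)=\log\mu(Q_n(x))/\log\delta_n$ for $x\in E$. Since the statement is local, we may assume $\mu$ is finite (restrict it to a large ball; this affects neither $\udimloc(\mu,\cdot)$, $\ldimloc(\mu,\cdot)$ nor $\mu(Q_n(\cdot))$ for large $n$ at $\mu$-a.e.\ point). As $\mu$ is non-atomic, $\mu(B(x,r))\to 0$ as $r\downarrow 0$, so all logarithms appearing below are eventually negative.

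\emph{The easy directions.} Since $x\in Q_n(x)$ and $Q_n(x)$ lies in a ball of radius $\gamma_n$, we have $Q_n(x)\subset B(x,2\gamma_n)$ and hence $\mu(Q_n(x))\le\mu(B(x,2\gamma_n))$ for every $n$. Dividing by $\log\delta_n<0$ and writing $\log\mu(B(x,2\gamma_n))/\log\delta_n=\bigl(\log\mu(B(x,2\gamma_n))/\log(2\gamma_n)\bigr)\cdot\bigl(\log(2\gamma_n)/\log\delta_n\bigr)$, where the second factor tends to $1$ by \ref{F-gammadeltalim}, gives $\liminf_n g_n(x)\ge\ldimloc(\mu,x)$ for all $x\in E$. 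For the $\limsup$ I pick scales $r_k\downarrow 0$ with $\log\mu(B(x,r_k))/\log r_k\to\udimloc(\mu,x)$ and let $n_k$ be the largest index with $2\gamma_{n_k}\le r_k$; then $Q_{n_k}(x)\subset B(x,r_k)$, and combining $\delta_{n_k}\le r_k<2\gamma_{n_k-1}$ with \ref{F-deltalim}--\ref{F-gammadeltalim} yields $\log r_k/\log\delta_{n_k}\to 1$, whence $\limsup_n g_n(x)\ge\udimloc(\mu,x)$. So ``$\ge$'' holds for every $x\in E$.

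\emph{Reduction of the hard directions.} It suffices to prove that for $\mu$-a.e.\ $x$ and every $\varepsilon>0$
\begin{equation*}
  \mu(Q_n(x))\ge\delta_n^{\varepsilon}\,\mu(B(x,\delta_n))\qquad\text{for all large }n. \tag{$\dagger$}
\end{equation*}
Indeed, $(\dagger)$ gives $g_n(x)\le\log\mu(B(x,\delta_n))/\log\delta_n+\varepsilon$ for all large $n$, and taking $\limsup$ and letting $\varepsilon\downarrow 0$ yields $\limsup_n g_n(x)\le\udimloc(\mu,x)$. For the $\liminf$ I use that $t\mapsto-\log\mu(B(x,e^{-t}))$ is non-decreasing: the set of $t$ for which $\log\mu(B(x,e^{-t}))/(-t)$ lies within $\varepsilon$ of $\ldimloc(\mu,x)$ then contains arbitrarily long intervals $[t_k/(1+\varepsilon),t_k]$ with $t_k\to\infty$, and by \ref{F-deltalim} the successive gaps of $\{-\log\delta_n\}$ are negligible compared with the lengths of these intervals, so each of them contains some $-\log\delta_{n_k}$; evaluating $(\dagger)$ along $n_k$ and letting $\varepsilon\downarrow 0$ gives $\liminf_n g_n(x)\le\ldimloc(\mu,x)$. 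Together with the easy directions this proves the proposition.

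\emph{Proof of $(\dagger)$, and the main obstacle.} Fix $\varepsilon>0$ and put $A_n=\{x\in E:\mu(Q_n(x))<\delta_n^{\varepsilon}\mu(B(x,\delta_n))\}$. If $x\in A_n$ and $Q=Q_n(x)$, then, as $B(x,\delta_n)\cap E$ is covered by the disjoint members of $\QQ_n(x,\delta_n)$ and $Q$ is one of them, comparing $\mu(Q)$ with $\mu(B(x,\delta_n))\le\sum_{Q'\in\QQ_n(x,\delta_n)}\mu(Q')$ shows that for large $n$ one has $\mu(Q)\le 2\delta_n^{\varepsilon}\sum_{Q'}\mu(Q')$, the sum taken over those $Q'\in\QQ_n$ whose ball $B_{Q'}$ meets a fixed ball of radius $O(\gamma_n)$ about $B_Q$. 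By the cardinality bound in Lemma~\ref{thm:covering_thm} there are at most $c(\gamma_n/\delta_n)^s$ such $Q'$, and dually each $Q'$ is counted in this way by at most $c(\gamma_n/\delta_n)^s$ many $Q$; summing $\mu(Q)$ over all $Q$ meeting $A_n$ therefore gives $\mu(A_n)\le C_\varepsilon(\gamma_n/\delta_n)^{s}\delta_n^{\varepsilon}\mu(X)$, and by \ref{F-gammadeltalim} we may write $(\gamma_n/\delta_n)^{s}=\delta_n^{-s\eta_n}$ with $\eta_n\to 0$, so $\mu(A_n)\le C_\varepsilon\delta_n^{\varepsilon-s\eta_n}\mu(X)\to 0$. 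The principal difficulty is now the summability of $\sum_n\mu(A_n)$, which may fail when $\delta_n\to 0$ slowly. To get around this I would enlarge $A_n$ to a partition-independent set $\widetilde A_n\supset A_n$ expressing ``$\mu$ on $B(x,\delta_n)$ avoids some $\delta_n$-ball up to an $\varepsilon$-fraction'': this depends on $n$ only through the scale $\delta_n$, and since by \ref{F-gammalim} the scales are monotone and tend to $0$, each ``block'' $\{n:\delta_n\in[2^{-j-1},2^{-j}]\}$ is finite, so it suffices to bound $\mu\bigl(\bigcup\{\widetilde A_n:\delta_n\in[2^{-j-1},2^{-j}]\}\bigr)$ and sum over $j$ along this geometric sequence of scales. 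Establishing such a Besicovitch/porosity-type bound for the measure of a ``concentration of $\mu$ near $x$ at scale $\delta_n$'' set is the crux of the whole argument; granting it, Borel--Cantelli over $j$ yields $(\dagger)$, and running it for a sequence $\varepsilon\downarrow 0$ completes the proof.
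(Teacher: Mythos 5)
Your ``easy'' directions and the reduction of the hard directions to the pointwise claim $(\dagger)$ are sound, and you have correctly located the obstruction: the bound $\mu(A_n)\le C_\eps\,\delta_n^{\eps-s\eta_n}$ tends to zero but need not be summable in $n$, since \ref{F-deltalim} allows arbitrarily many indices $n$ with $\delta_n$ in a single dyadic window (for instance $\delta_n=1/n$ satisfies \ref{F-deltalim}, and then the block $\{n:\delta_n\in[2^{-j-1},2^{-j}]\}$ has about $2^j$ elements, which overwhelms the gain $2^{-j\eps/2}$). The problem is that you then declare the resolution of exactly this obstruction to be ``the crux of the whole argument'' and grant it: nothing in your text bounds $\mu\bigl(\bigcup\{A_n:\delta_n\in[2^{-j-1},2^{-j}]\}\bigr)$ by anything summable in $j$, so the proof of $(\dagger)$, and hence of the proposition, is incomplete. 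Moreover, the repair you sketch --- enlarging $A_n$ to a partition-independent set $\widetilde A_n$ recording that ``$\mu$ on $B(x,\delta_n)$ avoids some $\delta_n$-ball up to an $\eps$-fraction'' --- is doubtful: you give no estimate for the measure of such a set, and for measures with strongly non-uniform local behaviour the set of points near which $\mu$ has a comparably sized ball of relatively tiny measure need not be small at any scale, so no Borel--Cantelli argument over $j$ is available for it.

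The paper closes this gap by a different mechanism, which you would need to import (or replace by an equivalent idea). It does not attempt the uniform density statement $(\dagger)$. Instead it fixes rationals $s<t$ straddling the two exponents to be compared and defines, for each dyadic scale $m$, a \emph{single} exceptional set $A_m(s,t,\eps)$ recording simultaneously a lower bound $\mu(B(x,2^{-m(1-\eps)}))>2^{-ms}$ and the existence of \emph{some} $n$ with $2^{-m}\le\delta_n<2^{-m(1-\eps)}$ and $\mu(Q_n(x))<2^{-mt(1-\eps)}$. This set is still filtration-dependent, but there is exactly one per dyadic scale, and the estimate $\mu(A_m(s,t,\eps))\lesssim 2^{-m(t-s-\eps(2h+t))}$ draws its summable decay from the gap $t-s$ between the two competing local exponents (via the recorded lower bound on $\mu(B(x,2^{-m(1-\eps)}))$), rather than from a power $\delta_n^{\eps}$ per index $n$. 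Your local counting estimate (comparing $\mu(Q_n(x))$ with the at most $c(\gamma_n/\delta_n)^s$ neighbouring cubes) is essentially the same combinatorial input the paper uses; what is missing is this re-indexing by dyadic scale together with the density condition, which is precisely what converts your non-summable family into a summable one.
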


\begin{proof}
  For every $x\in E$ and $n\in \N$ we have $Q_n(x)\subset B(x, 2\gamma_n)$. This, together with \ref{F-gammadeltalim} and \ref{F-deltalim}, implies
  $$
    \udimloc(\mu,x)\le\limsup_{n\to\infty}\frac{\log\mu(B(x, 2\gamma_n))}{\log 2\gamma_{n-1}}\leq\limsup_{n\to\infty}\frac{\log \mu(Q_n(x))}{\log\delta_n}
  $$
  and
  $$
    \ldimloc(\mu,x)\le\liminf_{n\to\infty}\frac{\log\mu(B(x, 2\gamma_n))}{\log 2\gamma_n}\leq\liminf_{n\to\infty}\frac{\log \mu(Q_n(x))}{\log\delta_n}.
  $$
 Therefore, it suffices to show that $\mu(\overline{A} \cup \underline{A}) = 0$, where
  \begin{align*}
    \overline{A} &= \{ x \in X : \udimloc(\mu,x) < \limsup_{n \to \infty} \log\mu(Q_n(x))/\log\delta_n \}, \\
    \underline{A} &= \{ x \in X : \ldimloc(\mu,x) < \liminf_{n \to \infty} \log\mu(Q_n(x))/\log\delta_n \}.
  \end{align*}
  Our plan is to show that
  \begin{equation} \label{eq:local_goal1}
    \overline{A} \cup \underline{A} \subset \bigcup_{\atop{0<s<t<\infty}{s,t \in \Q}} \bigcap_{0<\eps\in\Q} \bigcap_{k=1}^\infty \bigcup_{m=k}^\infty A_m(s,t,\eps),
  \end{equation}
  where
  \begin{align*}
    A_m(s,t,\eps) = \{ x \in X : \;&\mu(B(x,2^{-m(1-\eps)})) > 2^{-ms} \text{ and there is } n \in \N \\ &\text{such that } 2^{-m} \le \delta_n < 2^{-m(1-\eps)} \text{ and } \mu(Q_n(x)) < 2^{-mt(1-\eps)} \},
  \end{align*}
  and then use the Borel-Cantelli Lemma to conclude that $\mu(\overline{A} \cup \underline{A}) = 0$.

  Take $x \in \overline{A}$, choose $s,t \in \Q$ such that $\udimloc(\mu,x) < s < t < \limsup_{n \to \infty} \log\mu(Q_n(x))/\log\delta_n$, and fix $\eps>0$. Choose $r_0>0$ and a sequence $(n_k)_k$ with $n_k \to \infty$ so that
  \begin{equation*}
    \frac{\log\mu(B(x,r))}{\log r} < s < t < \frac{\log\mu(Q_{n_k}(x))}{\log\delta_{n_k}}
  \end{equation*}
  for all $0<r<r_0$ and $k \in \N$. Fix $k$ and choose $m$ such that $2^{-m} \le \delta_{n_k} < 2^{-m+1}$. Since $\delta_n \to 0$ we may assume that $m > \max\{ 1/\eps,k \}$ and all the quantities are smaller than $r_0$. This implies that
  \begin{equation} \label{eq:local1}
    2^{-m} \le \delta_{n_k} < 2^{-m+1}      
    < 2^{-m(1-\eps)}.
  \end{equation}
  Now, choosing $r=2^{-m}$, we have
  \begin{equation*}
    \frac{\log\mu(B(x,2^{-m(1-\eps)}))}{\log 2^{-m}} \le \frac{\log\mu(B(x,2^{-m}))}{\log 2^{-m}} < s
  \end{equation*}
  and hence
  \begin{equation} \label{eq:local2}
    \mu(B(x,2^{-m(1-\eps)})) > 2^{-ms}.
  \end{equation}
  Since
  \begin{equation*}
    t < \frac{\log\mu(Q_{n_k}(x))}{\log\delta_{n_k}} < \frac{\log\mu(Q_{n_k}(x))}{\log 2^{-m(1-\eps)}}
  \end{equation*}
  we have
  \begin{equation} \label{eq:local3}
    \mu(Q_{n_k}(x)) < 2^{-mt(1-\eps)}.
  \end{equation}
  Thus for each $\eps>0$ and $k \in \N$ there is $m \ge k$ so that \eqref{eq:local1}, \eqref{eq:local2}, and \eqref{eq:local3} hold, that is, $x \in A_m(s,t,\eps)$.

  With a same kind of reasoning, we see that for each $x\in \underline{A}$, $k\in\N$ and $\eps>0$, there is $m\ge k$ and $s,t\in\Q$ such that $x\in A_m(s,t,\eps)$.

  It remains to show that $\mu(\overline{A} \cup \underline{A}) = 0$. By the Borel-Cantelli Lemma, it suffices to show that for each $s,t \in \Q$ with $0<s<t$ there is $\eps>0$ so that
  \begin{equation} \label{eq:local_goal2}
    \sum_{m=1}^\infty \mu(A_m(s,t,\eps)) < \infty.
  \end{equation}
  Fix $s,t \in \Q$ with $0<s<t$ and let $h = \log_2 N$, where $N$ is the doubling constant of $X$. Choose $0<\eps<(t-s)/(h+t)$. Fix $m$ and take $x \in A_m(s,t,\eps)$. Let $n$ be as in the definition of $A_m(s,t,\eps)$ and define
  \begin{equation*}
    \AA = \{ Q \in \QQ_n : Q \cap A_m(s,t,\eps) \cap B(x,2^{-m(1-\eps)}) \ne \emptyset \}.
  \end{equation*}
  Since $Q$ is contained in a ball with radius $\gamma_n$ for all $Q \in \QQ_n$, we have $Q \subset B(x,2^{-m(1-\eps)}+2\gamma_n)$ for all $Q \in \AA$. Recalling that $\{ B_Q : Q \in \QQ_n \}$ is a $\delta_n$-packing, Lemma \ref{thm:covering_thm}(\ref{covering3}) (where we can take $s=\log_2 N$) implies
  \begin{equation*}
    \#\AA \le c\biggl( \frac{\delta_n}{2^{-m(1-\eps)}+2\gamma_n} \biggr)^{-h} \le c\biggl( \frac{2^{-m}}{3\times 2^{-m(1-\eps)^2}} \biggr)^{-h}
    \leq c3^h 2^{2mh\eps}\,,
  \end{equation*}
where we have used that $\gamma_n\le\delta_n^{1-\eps}$, which is true for all large $m\in\N$.
  We thus have
  \begin{align*}
    \mu(A_m(s,t,\eps) \cap B(x,2^{-m(1-\eps)})) &\le \sum_{Q \in \AA} \mu(Q) \le c3^h 2^{2mh\eps} 2^{-mt(1-\eps)} \\
    &= c3^h 2^{-ms-m(t-s-\eps(2h+t))} \\
    &\le c3^h 2^{-m(t-s-\eps(2h+t))} \mu(B(x,2^{-m(1-\eps)})).
  \end{align*}
  Using Lemma \ref{thm:covering_thm}(\ref{covering5}), we can cover $A_m(s,t,\eps)$ by a union of at most $M=M(N)$ many $\left(2^{-m(1-\eps)}\right)$-packings of $A_m(s,t,\eps)$ and thus
  \begin{equation*}
    \mu(A_m(s,t,\eps)) \le c4^h M\mu(X) 2^{-m(t-s-\eps(2h+t))}.
  \end{equation*}
  Since $t-s-\eps(2h+t)>0$ by the choice of $\eps$ we have shown \eqref{eq:local_goal2} and thus finished the proof.
\end{proof}

The following proposition shows that local $L^q$-dimensions can as well be defined via general filtrations. We note that a global version of this proposition can be obtained just by removing all the occurences of $x$ and $r$ in the claim.

\begin{proposition} \label{tauq}
  Let $X$ be a complete doubling metric space and $\mu$ a non-atomic measure supported on $E$ and $q \ge 0$. Then
  \begin{equation}\label{tauq:claim}
      \tau_q(\mu,x) = \lim_{r \downarrow 0} \liminf_{n \to \infty} \frac{\log\sum_{Q \in \QQ_n(x,r)} \mu(Q)^q}{\log \delta_n}
  \end{equation}
  for all $x \in \spt(\mu)$.
\end{proposition}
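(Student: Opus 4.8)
The plan is to compare, at scales $\delta$ comparable to $\delta_n$, the packing sum $S_q(\mu,B(x,r),\delta)$ with the filtration sum $\Sigma_n(x,r):=\sum_{Q\in\QQ_n(x,r)}\mu(Q)^q$, to deduce one of the two inequalities in \eqref{tauq:claim} from each direction of this comparison, and then to pass to the limits using that both $r\mapsto\liminf_{\delta\downarrow0}\frac{\log S_q(\mu,B(x,r),\delta)}{\log\delta}$ and $r\mapsto\liminf_{n\to\infty}\frac{\log\Sigma_n(x,r)}{\log\delta_n}$ are non-increasing in $r$. Fix $x\in\spt(\mu)$. Closed balls in a complete doubling space are compact, so every quantity below is well defined, and since $x\in\spt(\mu)$ we have $\mu(B(x,r))>0$, hence $\Sigma_n(x,r)>0$ and $S_q(\mu,B(x,r),\delta)>0$ for all $r>0$ and all small $\delta$. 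The mechanism that makes everything work is conditions \ref{F-deltalim} and \ref{F-gammadeltalim}: they give $\log\delta_{n-1}\sim\log\gamma_n\sim\log\delta_n$, so $\delta_{n-1}/\delta_n$ and $\gamma_n/\delta_n$ tend to infinity only subexponentially in $|\log\delta_n|$, and therefore any product of boundedly many powers $(\delta_{n-1}/\delta_n)^s$, $(\gamma_n/\delta_n)^s$ (with $s$ the exponent of Lemma \ref{thm:covering_thm}(\ref{covering3})) is of the form $\exp(o(|\log\delta_n|))$ and disappears after we divide a logarithm by $\log\delta_n$.

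\emph{From packings to cubes.} Given $0<\delta<\delta_1$, let $n=n(\delta)$ be determined by $\delta_n\le\delta<\delta_{n-1}$, so $n(\delta)\to\infty$ and $n(\delta)$ runs through every large integer. Let $\BB$ be a $\delta$-packing of $B(x,r)\cap\spt(\mu)$ and, for $B=B(y,\delta)\in\BB$, set $\AA_B=\{Q\in\QQ_n:Q\cap B\ne\emptyset\}$. Each $Q\in\AA_B$ lies in a ball of radius $\gamma_n$ that meets $B$, so $\AA_B$ is contained in a ball of radius $\delta+2\gamma_n$; since the balls $B_Q\subset Q$, $Q\in\QQ_n$, of radius $\delta_n$ are disjoint, Lemma \ref{thm:covering_thm}(\ref{covering3}) bounds $\#\AA_B$ by a subexponential quantity. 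As $\spt(\mu)\subset E\subset\bigcup_{Q\in\QQ_n}Q$ and the $Q$'s are disjoint, $\mu(B)\le\sum_{Q\in\AA_B}\mu(Q)$, and hence $\mu(B)^q\le(\#\AA_B)^{\max\{q-1,0\}}\sum_{Q\in\AA_B}\mu(Q)^q$ (subadditivity of $t\mapsto t^q$ for $0\le q\le1$, power means for $q\ge1$). Summing over $\BB$, noting that a fixed $Q$ meets at most subexponentially many members of the $\delta$-packing $\BB$ (again Lemma \ref{thm:covering_thm}(\ref{covering3})), and that any $Q$ meeting some $B\in\BB\subset B(x,r+\delta)$ belongs to $\QQ_n(x,r+\delta)$, we obtain
\[
  S_q(\mu,B(x,r),\delta)\le D_{n(\delta)}\,\Sigma_{n(\delta)}(x,r+\delta),\qquad \log D_n=o(|\log\delta_n|).
\]

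\emph{From cubes to packings.} For every $Q\in\QQ_n(x,r)$ with $\mu(Q)>0$ pick $y_Q\in Q\cap\spt(\mu)$; then $Q\subset B(y_Q,2\gamma_n)$ and $y_Q\in B(x,r+2\gamma_n)$, so $\mu(Q)\le\mu(B(y_Q,2\gamma_n))$. The balls $B(y_Q,2\gamma_n)$ overlap in general, so I would thin them out by colouring: put an edge between $Q,Q'$ whenever $d(y_Q,y_{Q'})\le4\gamma_n$; a neighbour $Q'$ of $Q$ then lies in $B(y_Q,6\gamma_n)$, so disjointness of the $\delta_n$-balls inside the cubes together with Lemma \ref{thm:covering_thm}(\ref{covering3}) bound the degree of $Q$ by a subexponential number $\Delta_n$. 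A greedy colouring uses $L_n:=\Delta_n+1$ colours, and within each colour class $\mathcal{G}$ the points $y_Q$ are more than $4\gamma_n$ apart, so $\{B(y_Q,2\gamma_n):Q\in\mathcal{G}\}$ is a $2\gamma_n$-packing of $B(x,r+2\gamma_n)\cap\spt(\mu)$, whence $\sum_{Q\in\mathcal{G}}\mu(Q)^q\le\sum_{Q\in\mathcal{G}}\mu(B(y_Q,2\gamma_n))^q\le S_q(\mu,B(x,r+2\gamma_n),2\gamma_n)$. Adding over the $L_n$ colour classes gives
\[
  \Sigma_n(x,r)\le L_n\,S_q(\mu,B(x,r+2\gamma_n),2\gamma_n),\qquad \log L_n=o(|\log\delta_n|).
\]

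\emph{Passing to the limit.} Dividing the first displayed inequality by $\log\delta<0$ and using $\log\delta\sim\log\delta_n$, $\log D_n=o(|\log\delta_n|)$ and $\Sigma_n(x,r+\delta)\le\Sigma_n(x,r')$ once $\delta<r'-r$, one gets, for every $r'>r$, that $\liminf_{\delta\downarrow0}\frac{\log S_q(\mu,B(x,r),\delta)}{\log\delta}\ge\liminf_{n\to\infty}\frac{\log\Sigma_n(x,r')}{\log\delta_n}$. Symmetrically, dividing the second inequality by $\log\delta_n<0$, using $\log\gamma_n\sim\log\delta_n$, $\log L_n=o(|\log\delta_n|)$, $S_q(\mu,B(x,r+2\gamma_n),2\gamma_n)\le S_q(\mu,B(x,r'),2\gamma_n)$ for large $n$, and that a $\liminf$ along the scales $2\gamma_n$ dominates $\liminf_{\delta\downarrow0}$, one gets $\liminf_{n\to\infty}\frac{\log\Sigma_n(x,r)}{\log\delta_n}\ge\liminf_{\delta\downarrow0}\frac{\log S_q(\mu,B(x,r'),\delta)}{\log\delta}$ for every $r'>r$. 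Taking $\sup_{r>0}$ (equivalently $\lim_{r\downarrow0}$) in each of these and letting the auxiliary radius $r'$ shrink to $0$ converts them into $\tau_q(\mu,x)\ge\lim_{r\downarrow0}\liminf_{n\to\infty}\frac{\log\Sigma_n(x,r)}{\log\delta_n}$ and its reverse, which is \eqref{tauq:claim}. The main obstacle is the cubes-to-packings step: the balls $B(y_Q,2\gamma_n)$ that capture all of each cube's mass genuinely overlap, so they must be decomposed into boundedly many honest packings, and one must check that this colouring constant — like every combinatorial constant entering the argument — is subexponential in $|\log\delta_n|$ and therefore harmless after normalisation.
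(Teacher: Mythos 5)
Your argument is correct and is essentially the paper's own approach (which in turn defers the details to \cite[Proposition 3.2]{KaenmakiRajalaSuomala2013}): you establish exactly the two comparison inequalities \eqref{lessequal} and \eqref{largerestimate} --- the packing sum at scale $\delta\approx\delta_n$ is dominated by the cube sum, and the cube sum by a $2\gamma_n$-packing sum, in each case up to a multiplicative factor that is subexponential in $|\log\delta_n|$ by \ref{F-deltalim} and \ref{F-gammadeltalim} --- and then pass to the limit. The only cosmetic difference is that you split the overlapping balls $B(y_Q,2\gamma_n)$ into boundedly many honest packings by greedy colouring instead of exhibiting a single packing with bounded overlap, which changes nothing after dividing logarithms by $\log\delta_n$.
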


\begin{proof}
To show that the right-hand side of \eqref{tauq:claim} is a lower bound for $\tau_q(\mu,x)$, it suffices to prove that for a fixed $x \in \spt(\mu)$ and $r>0$ we have
\begin{equation}\label{lessequal}
S_q(\mu,B(x,r),\delta)\leq c^{q}\Bigl(\frac{\delta_n}{3\gamma_n}\Bigr)^{-s(q-1)}\Bigl(\frac{\delta_{n+1}}{3\gamma_n}\Bigr)^{-s}\sum_{Q\in\QQ_n(x,r)}\mu(Q)^q.
\end{equation}
and to obtain the estimate in the other direction, it suffices to find a $2\gamma_n$-packing $\mathcal{B}$ of $B(x,r)\cap\spt(\mu)$ such that
\begin{equation}\label{largerestimate}
\sum_{Q\in\QQ_n(x,r)}\mu(Q)^q\leq c\Bigl(\frac{\delta_n}{3\gamma_n}\Bigr)^{-s}\sum_{B\in\mathcal{B}}\mu(B)^q.
\end{equation}
Both of these claims can be verified by following closely the proof of \cite[Proposition 3.2]{KaenmakiRajalaSuomala2013}. We omit the details.
\end{proof}

\section{Dimensions in Moran constructions}\label{sec:moran}

In the definition of the Moran construction, we use a suitable notion of codetrees. Let $I$ be a countable (finite or infinite) set, $I^\infty = I^\N$, and $I^0 = \{ \varnothing \}$. We refer to the elements of $I^\infty \cup \bigcup_{j=0}^\infty I^j$ as \emph{words}. The length of a finite word $\iii = i_1 i_2 \cdots i_m$ is denoted by $|\iii|=m$. We set $\iii|_n = i_1 \cdots i_n$ for all $1 \le n \le |\iii|$ and $\iii|_0 = \varnothing$. The concatenation of a finite word $\iii$ and another word $\jjj$ is denoted by $\iii\jjj$. We also set $\iii^- = \iii|_{n-1}$ for all $\iii \in I^n$ and $n \in \N$.

If $\Sigma \subset \bigcup_{n=0}^\infty I^n$, then we set $\Sigma_n = \Sigma \cap I^n$ for all $n \ge 0$ and $\Sigma_\infty = \{ \iii \in I^\infty : \iii|_n \in \Sigma_n \text{ for all } n \in \N \}$. If $\iii \in \Sigma$ and $\jjj = \iii i \in \Sigma_{|\iii|+1}$ for some $i \in I$, then we say that $\jjj$ is an \emph{offspring} of $\iii$ and write $\jjj \prec \iii$. We say that $\Sigma$ is a \emph{codetree}, if $\varnothing \in \Sigma$, $\iii^{-}\in\Sigma$ whenever $\varnothing\neq\iii\in\Sigma$, and the number of offsprings $\#\{\jjj \in \Sigma_{|\iii|+1} : \jjj \prec \iii \}$ is positive and finite for all $\iii \in \Sigma$. If $\Sigma$ is a codetree and $X$ is a complete doubling metric space, then we say that a collection $\{E_\iii \subset X : \iii \in \Sigma\}$ of compact sets with positive diameter is a \emph{Moran construction}, if
\begin{labeledlist}{M}
  \item $E_\iii \subset E_{\iii^-}$ for all $\varnothing\neq\iii \in \Sigma$, \label{M-nested}
  \item $\lim_{n\rightarrow\infty}\diam(E_{\iii|_n})=0$
  for each $\iii\in\Sigma_\infty$,
  \label{M-diam}
  \item $E_{\iii i}\cap E_{\iii j}=\emptyset$ if $\iii,\iii i, \iii j\in\Sigma$ and $i\neq j$, \label{M-disjoint}
  \item for each $\iii\in\Sigma$, there is $x\in E_\iii$ such that $B\left(x,C_0 \diam(E_\iii)\right)\subset E_\iii$, \label{M-inradius}
  \item $\lim_{n \to \infty} \log\diam(E_{\iii|_n})/\log\min\{ \diam(E_{\jjj}) : \jjj \prec \iii|_n \}=1$ uniformly
  for all $\iii \in \Sigma_\infty$. \label{M-logdiam}
\end{labeledlist}
In \ref{M-inradius}, $C_0>0$ is a uniform constant. Note that if $\jjj \prec \iii$, then $E_\jjj \subset E_\iii$ and $|\jjj| = |\iii|+1$.

We define the \emph{limit set} of the Moran construction to be the compact set $E = \bigcap_{n \in \N} \bigcup_{\iii \in \Sigma_n} E_\iii$.
If $\iii \in \Sigma_\infty$, then the single point in the set $\bigcap_{n \in \N} E_{\iii|_n} \subset E$ is denoted by $x_\iii$. Observe that the mapping $\iii \mapsto x_\iii$, which we denote by $\Pi$, is a bijective projection from $\Sigma_\infty$ to $E$.
The main goal of this article is to obtain information about the dimensional properties of $E$ and measures supported by $E$. Since $E$ is bounded and in doubling metric spaces, bounded sets are totally bounded, we may, without loss of generality, assume that $X$ is a compact doubling metric space.

Given a codetree $\Sigma$, a Moran construction $\{E_\iii\}_{\iii\in \Sigma}$, and a measure $\mu$ on $E$, we define
\begin{align*}
  p_\iii &= \mu(E_\iii)/\mu(E_{\iii^{-}}), \\
  c_\iii &= \diam(E_\iii)/\diam(E_{\iii^{-}})
\end{align*}
for all $\varnothing \ne \iii \in \Sigma$ and $p_\varnothing = \mu(E_\varnothing)$, $c_\varnothing = \diam(E_\varnothing)$. If $\mu(E_\iii) = 0$ for some $\iii$, then we interpret that $p_\jjj = 0$ for all $\jjj \prec \iii$.
The numbers $p_\iii$ and $c_\iii$ can be thought as functions from $\Sigma$ to $[0,1]$. Observe that $\prod_{n=0}^{|\iii|} p_{\iii|_n} = \mu(E_\iii)$ and $\sum_{\jjj \prec \iii} p_\jjj = 1$ for all $\iii \in \Sigma$ (as long as $\mu(E_\iii)>0$). For convenience of notation, we will also denote
\begin{equation}\label{ccc:def}
\ccc_\iii=\prod_{i=1}^n c_\iii,
\end{equation}
for $\iii=i_1\ldots i_n\in\Sigma_n$.

We say that the Moran construction $\{E_\iii\}_{\iii\in\Sigma}$ is \emph{spatially symmetric}, if $I=\N$ and for each $k \in \N$ there is $N_k \in \N$ so that $\Sigma_n = \{ i_1 i_2 \cdots i_n : i_k \in \{1,2,\ldots, N_k\} \text{ for all } k \in \{1,2,\ldots, n\} \}$ for all $n\in\N$. We also assume that for every $k \in \N$ and $i \in \{1,2,\ldots,N_k\}$ there is $0 < c_{k,i} < 1$ so that
\begin{equation}\label{def:ss}
c_{\iii i} = c_{k,i}\text{ for all }\iii \in \Sigma_k.
\end{equation}
Although being useful in the Euclidean spaces, in general doubling metric spaces this notion does not always make sense. For instance, it is often impossible to find any such constructions. Furthermore, the property of being spatially homogeneous fails to be bi-Lipschitz invariant. For these reasons we introduce the following relaxed notion: We say that the Moran construction $\{E_\iii\}_{\iii\in\Sigma}$ is \emph{asymptotically spatially symmetric} if in the definition, the assumption \eqref{def:ss}, is replaced by the weaker condition
\begin{equation}\label{def:ass}
\lim_{n\to\infty}\frac{\sum_{j=1}^n \log c_{\iii|_j}}{\sum_{j=1}^n \log c_{j, i_j}}=1\text{ for all }\iii\in\Sigma_\infty.
\end{equation}
This notion is more useful in the sense that it is always possible to find such constructions under very general assumptions. For instance, in addition to the doubling property, it suffices to assume that $X$ is uniformly perfect (see \cite[Remark 4.3]{KaenmakiRajalaSuomala2013}). Recall that $X$ is \emph{uniformly perfect}, if there is $\eta>0$ such that
\begin{equation*}
B(x,r)\setminus B(x,\eta r)\neq\emptyset\text{ whenever }X\setminus B(x,r)\neq\emptyset.
\end{equation*}
We illustrate this in the light of the following simple example. More complicated Moran sets can be obtained with the same method, and it is clear from the construction that also the uniform perfectness assumption could be relaxed to some extent.

\begin{example}
Let $X$ be uniformly perfect with parameter $\eta$. We construct an asymptotically spatially symmetric Moran set $E$ with $\Sigma_\infty=\{0,1\}^\N$ and $c_{k,i}=\eta^2/3=:c$ for all $k\in\N$, $i\in\{0,1\}$. Without loss of generality, we assume that $\diam(X)>2$ so that we may find $x_\varnothing\in X$ with $X\setminus B(x_\varnothing,1)\neq\varnothing$. Set $r_\varnothing=1$ and $E_\varnothing=B(x_\varnothing,r_\varnothing)$. If $E_\iii=B(x_\iii,r_\iii)$, $\iii\in\{0,1\}^{n-1}$, we construct $E_{\iii 0},E_{\iii 1}$ inductively as follows:
Set $x_{\iii 0}=x_\iii$. Using the uniform perfectness, pick $x_{\iii 1}\in B(x_{\iii 0},r_\iii)\setminus B(x_{\iii 0}, \tfrac23\eta r_\iii)$ such that $B(x_{\iii 1},\tfrac{\eta}{3}r_\iii)\subset B(x_\iii, r_\iii)$. For $i=0,1$, define $E_{\iii i}=B(x_{\iii i}, r_{\iii i})$, where
\[r_{\iii i}=\inf\{r>0\,:\,\diam B(x_{\iii i},r)\ge c^n\}\,.\]

Now, it follows immediately that $\diam E_\iii\ge c^{n}$ for all $\iii\in\{0,1\}^n$. Further, using induction and the uniform perfectness, we have $\diam(E_\iii)\le 2 r_\iii< 2\eta^{-1}c^{n}$. Finally, since $\diam B(x_{\iii i},\tfrac{\eta}{3}r_\iii)\ge c^n$ if $i\in\{0,1\}^{n-1}$, again using uniform perfectness and the definition of $c$, it follows that $E_{\iii 0}, E_{\iii 1}$ are pairwise disjoint compact subsets of $E_\iii$. It is now clear that the Moran construction $\{E_\iii\}$ is asymptotically spatially symmetric with $c_{k,i}\equiv c$. Indeed,
\[
\frac{\sum_{j=1}^n \log c_{\iii|_j}}{\sum_{j=1}^n \log c_{j, i_j}}
=\frac{\log\left(\diam E_{\iii|_n}/\diam E_{i_1}\right)}{\log c^n}\,,\]
and we have $A^{-1} c^n\le \diam E_{\iii|_n}/\diam E_\varnothing\le A c^n$ for some absolute constant $A<\infty$.
\end{example}

We next transfer Proposition \ref{partitionlemma} into the setting of Moran constructions.

\begin{lemma} \label{thm:moran_dim}
  Suppose that $X$ is a complete doubling metric space, $\{ E_\iii \}_{\iii \in \Sigma}$ is a Moran construction, and $\mu$ is a measure supported on $E$. Then
  \begin{align*}
    \udimloc(\mu,x_\iii) &= \limsup_{n \to \infty} \frac{\log\mu(E_{\iii|_n})}{\log\diam(E_{\iii|_n})}, \\
    \ldimloc(\mu,x_\iii) &= \liminf_{n \to \infty} \frac{\log\mu(E_{\iii|_n})}{\log\diam(E_{\iii|_n})},
  \end{align*}
  for $\mu$-almost all $x_\iii \in E$.
\end{lemma}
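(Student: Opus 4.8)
The plan is to deduce Lemma~\ref{thm:moran_dim} from Proposition~\ref{partitionlemma} by recognizing the sets $\{E_\iii : \iii \in \Sigma_n\}$ as (essentially) a general filtration of $E$. First I would set $\QQ_n = \{ E_\iii : \iii \in \Sigma_n \}$ and check the structural hypotheses: by \ref{M-disjoint} together with \ref{M-nested} the sets in $\QQ_n$ are pairwise disjoint, and $E = \bigcap_n \bigcup_{\iii \in \Sigma_n} E_\iii$ by definition, so $E$ is precisely the limit set associated to this collection. For each $\iii \in \Sigma_n$ the set $E_\iii$ is contained in a ball of radius $\diam(E_\iii)$ (take any point of $E_\iii$ as center), and by \ref{M-inradius} it contains a ball of radius $C_0\diam(E_\iii)$. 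The one mismatch with the framework of Section~\ref{sec:filtrations} is that the inner and outer radii $C_0\diam(E_\iii)$ and $\diam(E_\iii)$ depend on $\iii$, not just on $n$; I would handle this by passing to the deterministic scales $\delta_n = C_0 \min\{\diam(E_\jjj) : \jjj \in \Sigma_n\}$ and $\gamma_n = \max\{\diam(E_\jjj) : \jjj \in \Sigma_n\}$, or, if $\Sigma$ is infinite so these need not be positive/finite, by working along a fixed $\iii \in \Sigma_\infty$ and using the per-word scales — I will comment on this point below.

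Next I would verify the filtration axioms \ref{F-smaller}--\ref{F-gammadeltalim}. Property \ref{F-smaller} is immediate once $C_0 \le 1$ (and if $C_0 > 1$ one simply replaces $C_0$ by $\min\{C_0,1\}$ in the definition of the inner radius, which only weakens \ref{M-inradius}), and \ref{F-gammalim} follows from \ref{M-diam}. The two asymptotic regularity conditions \ref{F-deltalim} and \ref{F-gammadeltalim} are exactly what condition \ref{M-logdiam} is designed to give: the uniform limit $\log\diam(E_{\iii|_n})/\log\min\{\diam(E_\jjj):\jjj\prec\iii|_n\} \to 1$ controls the ratio of successive diameters along any branch, and from this, together with the monotonicity $\diam(E_{\iii|_{n+1}}) \le \diam(E_{\iii|_n})$, one extracts $\log\delta_n/\log\delta_{n+1} \to 1$ and $\log\gamma_n/\log\delta_n \to 1$. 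Once $\{\QQ_n\}$ is seen to be a general filtration of $E$, Proposition~\ref{partitionlemma} applies verbatim: for $\mu$-a.e.\ $x \in E$, writing $x = x_\iii$, we have $Q_n(x) = E_{\iii|_n}$, so $\udimloc(\mu,x_\iii) = \limsup_n \log\mu(E_{\iii|_n})/\log\delta_n$ and similarly for the liminf. The final step is to replace $\log\delta_n$ by $\log\diam(E_{\iii|_n})$ in these formulas, which is legitimate because \ref{M-logdiam} forces $\log\delta_n / \log\diam(E_{\iii|_n}) \to 1$ (both $\delta_n$ and $\diam(E_{\iii|_n})$ lie between $C_0\min_{\jjj \in \Sigma_n}\diam(E_\jjj)$-type bounds whose logarithms are asymptotically equivalent), and multiplying a sequence by a factor tending to $1$ changes neither its $\limsup$ nor its $\liminf$.

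I expect the main obstacle to be the bookkeeping around condition \ref{M-logdiam} and the uniformity it asserts: one must be careful that the convergence is uniform over $\iii \in \Sigma_\infty$ so that the deterministic scales $\delta_n, \gamma_n$ genuinely satisfy \ref{F-deltalim} and \ref{F-gammadeltalim}, and, in the case $I$ infinite, that $\inf_{\iii \in \Sigma_n}\diam(E_\iii) > 0$ for each fixed $n$ (which holds because there are only finitely many words of each length once we note that $\Sigma$ is locally finite, though $\bigcup_n \Sigma_n$ itself may be infinite — so $\delta_n$ is well-defined and positive) and that $\diam(E_\varnothing) < \infty$ (true since $E_\varnothing$ is compact). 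A cleaner alternative, which I would mention, is to avoid global scales entirely: fix $\iii \in \Sigma_\infty$, set $\delta_n^{(\iii)} = C_0\diam(E_{\iii|_n})$ and $\gamma_n^{(\iii)} = \diam(E_{\iii|_n})$, observe that the proof of Proposition~\ref{partitionlemma} only ever uses the filtration properties for the single branch through the generic point, and note that \ref{M-logdiam} gives precisely \ref{F-deltalim}, \ref{F-gammadeltalim} for these branch-dependent scales. Either way, modulo this translation the lemma is a direct corollary, and I would write it as such: ``Apply Proposition~\ref{partitionlemma} with $\QQ_n = \{E_\iii : \iii \in \Sigma_n\}$; the hypotheses \ref{F-smaller}--\ref{F-gammadeltalim} follow from \ref{M-nested}, \ref{M-diam}, \ref{M-inradius}, and \ref{M-logdiam}, and the replacement of $\delta_n$ by $\diam(E_{\iii|_n})$ in the conclusion is again justified by \ref{M-logdiam}.''
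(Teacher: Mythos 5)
Your overall strategy --- realize the Moran construction as a general filtration and invoke Proposition \ref{partitionlemma} --- is exactly the paper's, but the specific choice $\QQ_n=\{E_\iii:\iii\in\Sigma_n\}$ together with the global scales $\delta_n=C_0\min_{\jjj\in\Sigma_n}\diam(E_\jjj)$ and $\gamma_n=\max_{\jjj\in\Sigma_n}\diam(E_\jjj)$ has a genuine gap: condition \ref{M-logdiam} only compares the diameter of a word with the diameters of its \emph{own offspring}, uniformly along branches, and such consecutive-ratio control does not pin down the decay rate across different branches. Nothing in \ref{M-nested}--\ref{M-logdiam} prevents $-\log\diam(E_{\iii|_n})$ from growing like $n$ along one branch and like $n^2$ along another (both have consecutive ratios tending to $1$ uniformly). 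In that situation $\log\gamma_n/\log\delta_n\to 0$, so \ref{F-gammadeltalim} fails for your scales, and your final substitution of $\log\diam(E_{\iii|_n})$ for $\log\delta_n$ is equally illegitimate on the slowly decaying branch, where $\log\delta_n/\log\diam(E_{\iii|_n})\to\infty$. Your fallback of branch-dependent scales $\delta_n^{(\iii)},\gamma_n^{(\iii)}$ does not rescue this: the proof of Proposition \ref{partitionlemma} is a global Borel--Cantelli argument in which the packing count $\#\AA\le c\bigl(\delta_n/(2^{-m(1-\eps)}+2\gamma_n)\bigr)^{-h}$ crucially uses that \emph{every} $Q\in\QQ_n$ near the point (not just $Q_n(x)$) contains a ball of the same radius $\delta_n$; with scales varying from branch to branch that counting collapses, so the proposition cannot simply be ``run along a single branch.''

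The paper's proof repairs exactly this by not taking $\QQ_n$ to be a generation of the tree. It sets $\gamma_n=\min\{\diam(E_\iii):\iii\in\Sigma_n\}$ and defines $\QQ_n=\{E_\jjj:\diam(E_\jjj)\le\gamma_n<\diam(E_{\jjj^-})\}$, a section of the tree cut at diameter $\gamma_n$ whose members have different word lengths but comparable sizes: by \ref{M-logdiam}, for $n\ge N_k$ every such $\jjj$ satisfies $\diam(E_\jjj)\ge\diam(E_{\jjj^-})^{1/(1-1/k)}>\gamma_n^{1/(1-1/k)}$, so with $\delta_n=C_0\gamma_n^{1/(1-1/k)}$ all of \ref{F-smaller}--\ref{F-gammadeltalim} hold and each $Q\in\QQ_n$ contains a ball of radius $\delta_n$ by \ref{M-inradius}. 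Then $Q_n(x_\iii)=E_{\iii|_{m(n)}}$ with $\log\delta_n/\log\diam(E_{\iii|_{m(n)}})\to1$, which is what legitimizes the passage from $\log\delta_n$ to $\log\diam$ in the conclusion. So the reduction you propose is the right one, but the stopping-time construction of $\QQ_n$ is an essential ingredient, not bookkeeping.
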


\begin{proof}
  Without loss of generality, suppose that $\diam(E_\varnothing)\le 1$. Define
  $$\gamma_n=\min\{\diam(E_\iii): \iii \in \Sigma_n\}$$
  and
  $$\QQ_n=\{E_\jjj: \diam(E_\jjj)\leq\gamma_n<\diam(E_{\jjj^{-}})\}$$
  for all $n \in \N$. By the assumption \ref{M-logdiam}, for any $k\in \N$,  there exists $N_k\in\mathbb{N}$ such that for each $n\geq N_k$ the inequality
  $$\diam(E_{\iii|_n})\leq \min_{\jjj\prec\iii|_n}\diam(E_\jjj)^{1-1/k}$$
  holds for all $\iii\in\Sigma_\infty$. We can choose the sequence $(N_k)_{k \in \N}$ such that it is strictly increasing. Let $0<C_0<1$ be as in \ref{M-inradius} and define
  $$\delta_n=C_0\gamma_n^{1/(1-1/k)}$$
  for all $N_k\leq n<N_{k+1}$ and $k \in \N$.
  Therefore we have
  $$\lim_{n\to\infty}\frac{\log\gamma_n}{\log\delta_n}=\lim_{k\to\infty}(1-\tfrac{1}{k})=1.$$
  It is clear that $\delta_n\leq \gamma_n$ and it follows from \ref{M-diam} that $\lim_{n\to\infty}\gamma_n=0$. We have now verified that the sequences $(\gamma_n)_{n \in \N}$ and $(\delta_n)_{n \in \N}$ satisfy \ref{F-smaller}, \ref{F-gammalim}, and \ref{F-gammadeltalim}. Let $k \in \N$, $n \ge N_k$, and choose $\iii \in \Sigma_{n+1}$ so that $\gamma_{n+1} = \diam(E_\iii)$. Then $\gamma_n \le \diam(E_{\iii^-}) \le \gamma_{n+1}^{1-1/k}$ and
  \begin{equation*}
    \frac{\log\gamma_n}{\log\gamma_{n+1}} \ge 1-\tfrac{1}{k}
  \end{equation*}
  for all $n \ge N_k$. This, and the fact that we may switch $\gamma_n$ to $\delta_n$ in the limit, shows that \ref{F-deltalim} holds. Furthermore, for any $Q\in\QQ_n$, the choices of $\gamma_n$ and $\delta_n$ together with \ref{M-inradius} imply that $Q$ contains a ball of radius $\delta_n$. Therefore the collection $\{ \QQ_n \}_{n \in \N}$ is a general filtration for $E$ and the claims follow from Proposition \ref{partitionlemma}.
\end{proof}

It has been observed recently in many occasions that the local dimensions can be expressed in terms of certain local entropy averages; see e.g.\ \cite{LlorenteNicolau2004, HochmanShmerkin2012, Shmerkin2011}.
The following is a variant of this result, where instead of $\R^d$ we consider a doubling metric space, and the dyadic (or $m$-adic) cubes are replaced by the basic sets $E_\iii$.
As a particular case, we recover and extend the recent results of Lou and Wu \cite{LouWu2010} and Li and Wu \cite{LiWu2011} for the local dimensions of spatially symmetric Moran measures in the Euclidean setting. For instance, in \cite{LouWu2010,LiWu2011}, it is assumed that the ``contractions'' $c_\iii$ are uniformly bounded away from zero.

\begin{theorem}\label{entropyaverages}
Suppose that $X$ is a complete doubling metric space, $\{ E_\iii \}_{\iii \in \Sigma}$ is a Moran construction, and $\mu$ is a measure on $E$. If
\begin{equation}\label{diamspeed}
\liminf_{n\to\infty}-\frac{1}{n}\log\diam(E_{\iii|_n})>0\text{ for each }\iii\in\Sigma_\infty
\end{equation}
 and
\begin{equation} \label{L2bound}
  \sum_{n=1}^\infty n^{-2} \sup_{\iii\in\Sigma_n} \sum_{\jjj\prec\iii}p_{\jjj}\left((\log p_\jjj)^2+(\log c_\iii)^2\right) < \infty,
\end{equation}
then
\begin{align*}
  \udimloc(\mu,x_\iii) &= \limsup_{N \to \infty} \frac{\sum_{n=1}^N \sum_{\jjj \prec \iii|_n} p_\jjj \log p_\jjj}{\sum_{n=1}^N \sum_{\jjj \prec \iii|_n} p_\jjj \log c_\jjj}, \\
  \ldimloc(\mu,x_\iii) &= \liminf_{N \to \infty} \frac{\sum_{n=1}^N \sum_{\jjj \prec \iii|_n} p_\jjj \log p_\jjj}{\sum_{n=1}^N \sum_{\jjj \prec \iii|_n} p_\jjj \log c_\jjj},
\end{align*}
for $\mu$-almost all $x_\iii \in E$.
\end{theorem}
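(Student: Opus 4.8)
The plan is to combine Lemma~\ref{thm:moran_dim} with a martingale strong law of large numbers. Set $P_N=\sum_{n=1}^N\sum_{\jjj\prec\iii|_n}p_\jjj\log p_\jjj$ and $R_N=\sum_{n=1}^N\sum_{\jjj\prec\iii|_n}p_\jjj\log c_\jjj$. By Lemma~\ref{thm:moran_dim} it suffices to prove that, for $\mu$-almost every $\iii\in\Sigma_\infty$, the sequences $\log\mu(E_{\iii|_N})/\log\diam(E_{\iii|_N})$ and $P_N/R_N$ have the same $\limsup$ and the same $\liminf$ as $N\to\infty$. The underlying observation is that $P_N$ and $R_N$ are, up to negligible boundary terms, the predictable compensators of the telescoping sums $\log\mu(E_{\iii|_N})=\sum_{n=0}^N\log p_{\iii|_n}$ and $\log\diam(E_{\iii|_N})=\sum_{n=0}^N\log c_{\iii|_n}$. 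To exploit this I would normalise $\mu$ to a probability measure and transport it to the measure $\nu$ on $\Sigma_\infty$ determined by $\nu(\{\iii:\iii|_n=\jjj\})=\mu(E_\jjj)$, and take $(\mathcal{G}_n)_{n}$ to be the filtration generated by $\iii\mapsto\iii|_n$. Then $\mathbb{E}_\nu[\log p_{\iii|_n}\mid\mathcal{G}_{n-1}]=\sum_{\jjj\prec\iii|_{n-1}}p_\jjj\log p_\jjj$, so $D_n:=\log p_{\iii|_n}-\mathbb{E}_\nu[\log p_{\iii|_n}\mid\mathcal{G}_{n-1}]$ is a martingale difference sequence, and after shifting the index in the compensator sum we get
\[
\log\mu(E_{\iii|_N})=P_N+\mathcal{E}_N,\qquad\mathcal{E}_N=\sum_{n=1}^N D_n-\sum_{\jjj\prec\iii|_N}p_\jjj\log p_\jjj+O(1),
\]
and likewise $\log\diam(E_{\iii|_N})=R_N+\mathcal{F}_N$ with $\mathcal{F}_N=\sum_{n=1}^N D_n'-\sum_{\jjj\prec\iii|_N}p_\jjj\log c_\jjj+O(1)$, where $D_n'=\log c_{\iii|_n}-\mathbb{E}_\nu[\log c_{\iii|_n}\mid\mathcal{G}_{n-1}]$.

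The heart of the argument is to show $\mathcal{E}_N=o(N)$ and $\mathcal{F}_N=o(N)$ $\nu$-almost surely. For the conditional second moments one has $\mathbb{E}_\nu[D_n^2\mid\mathcal{G}_{n-1}]\le\sum_{\jjj\prec\iii|_{n-1}}p_\jjj(\log p_\jjj)^2$ and, using $\sum_{\jjj\prec\iii}p_\jjj(\log c_\jjj)^2\le\max_{\jjj\prec\iii}(\log c_\jjj)^2\le\sup_{\kkk\in\Sigma_n}(\log c_\kkk)^2$ for $\iii\in\Sigma_{n-1}$, also $\mathbb{E}_\nu[(D_n')^2\mid\mathcal{G}_{n-1}]\le\sup_{\iii\in\Sigma_n}(\log c_\iii)^2$. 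Hence~\eqref{L2bound} gives $\sum_n n^{-2}\mathbb{E}_\nu[D_n^2]<\infty$ and $\sum_n n^{-2}\mathbb{E}_\nu[(D_n')^2]<\infty$; since $\sum_m m^{-1}D_m$ then defines an $L^2$-bounded martingale, it converges $\nu$-almost surely, and Kronecker's lemma yields $N^{-1}\sum_{n=1}^N D_n\to0$ and $N^{-1}\sum_{n=1}^N D_n'\to0$ almost surely. Moreover~\eqref{L2bound} forces $\sup_{\iii\in\Sigma_n}\sum_{\jjj\prec\iii}p_\jjj(\log p_\jjj)^2=o(n^2)$ and $\sup_{\iii\in\Sigma_n}(\log c_\iii)^2=o(n^2)$, so by the Cauchy--Schwarz inequality the boundary terms $\sum_{\jjj\prec\iii|_N}p_\jjj\log p_\jjj$ and $\sum_{\jjj\prec\iii|_N}p_\jjj\log c_\jjj$ are $o(N)$ as well; combining, $\mathcal{E}_N=o(N)$ and $\mathcal{F}_N=o(N)$.

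Finally, by~\eqref{diamspeed} there is $\delta_0=\delta_0(\iii)>0$ with $-\log\diam(E_{\iii|_N})\ge\delta_0 N$ for all large $N$, whence $-R_N\ge\tfrac12\delta_0 N$ eventually, so that $\mathcal{E}_N/R_N\to0$ and $\mathcal{F}_N/R_N\to0$. Since $P_N$ is a sum of terms $p_\jjj\log p_\jjj\le0$ and $R_N<0$ for large $N$, the ratio $P_N/R_N$ is eventually nonnegative, and the elementary identity $(P_N+\mathcal{E}_N)/(R_N+\mathcal{F}_N)=(P_N/R_N)(1+o(1))+o(1)$ shows that $\log\mu(E_{\iii|_N})/\log\diam(E_{\iii|_N})$ and $P_N/R_N$ have the same $\limsup$ and the same $\liminf$ (the roles of the two sequences being symmetric, and the conclusion holding whether these limits are finite or $+\infty$). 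Together with Lemma~\ref{thm:moran_dim} this completes the proof, the $\liminf$ statement being entirely analogous.

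The step I expect to be the main obstacle is this last one. Because~\eqref{L2bound} only controls the second moments of $\log p$ and $\log c$, each of $P_N,R_N,\mathcal{E}_N,\mathcal{F}_N$ may a priori grow faster than linearly in $N$, while only $-R_N$ (equivalently $-\log\diam(E_{\iii|_N})$) is bounded below by a linear function of $N$; consequently one cannot simply divide everything by $N$ and must instead estimate the ratio $(P_N+\mathcal{E}_N)/(R_N+\mathcal{F}_N)$ directly, using $\mathcal{E}_N/R_N,\mathcal{F}_N/R_N\to0$. A secondary point that needs attention is matching the conditional variance of $D_n'$, which naturally involves $\sum_{\jjj\prec\iii}p_\jjj(\log c_\jjj)^2$, to the quantity $\sup_{\iii\in\Sigma_n}(\log c_\iii)^2$ occurring in~\eqref{L2bound}; this is handled by the crude bound $\sum_{\jjj\prec\iii}p_\jjj(\log c_\jjj)^2\le\max_{\jjj\prec\iii}(\log c_\jjj)^2$ noted above.
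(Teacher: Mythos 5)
Your proposal is correct and follows essentially the same route as the paper: transport $\mu$ to the coding space, apply the strong law of large numbers for martingale differences (which the paper cites from Feller and you re-derive via an $L^2$-bounded martingale plus Kronecker's lemma) to both $\log p_{\iii|_n}$ and $\log c_{\iii|_n}$ with compensators $H_{n-1}$ and $J_{n-1}$, invoke \eqref{diamspeed} to pass from the additive $o(N)$ errors to the ratio, and finish with Lemma \ref{thm:moran_dim}. Your explicit treatment of the shifted boundary terms $H_N$, $J_N$ and of the possibility that $P_N/R_N$ is unbounded is slightly more careful than the paper's, but the argument is the same.
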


\begin{proof}
  Denote by $\nu$ the measure on $\Sigma_\infty$ satisfying $\nu([\iii]) = \mu(E_\iii)$ for all $\iii \in \Sigma$. Let $\mathcal{F}_n$ be the $\sigma$-algebra generated by $\Sigma_n$. This is an increasing filtration that converges to the Borel $\sigma$-algebra of $\Sigma_\infty$. Define $I_n(\iii) = \log p_{\iii|_n}$ and $H_n(\iii) = \sum_{\jjj \prec \iii|_n} p_\jjj\log p_\jjj$ for all $\iii \in \Sigma_\infty$. Then $\mathbb{E}(I_n\,|\,\mathcal{F}_{n-1})=H_{n-1}$. Moreover,
  \[
    \sum_{n=1}^\infty n^{-2} \mathbb{E}\bigl( (I_n-H_{n-1})^2 \bigr) < \infty
  \]
  by the assumption \eqref{L2bound}. Thus, the law of large numbers for martingale differences (see \cite[Theorem 3 in Chapter VII.9]{Feller1971}) implies that
  \begin{align*}
    \lim_{N \to \infty} \frac1N \Bigl( \log\mu(E_{\iii|_N}) - \sum_{n=1}^N \sum_{\jjj \prec \iii|_n} p_\jjj\log p_\jjj \Bigr)
    &= \lim_{N \to \infty} \frac1N \Bigl( \sum_{n=1}^N \log p_{\iii|_n} - \sum_{n=0}^{N-1} \sum_{\jjj \prec \iii|_n} p_\jjj\log p_\jjj \Bigr) \\
    &= \lim_{N \to \infty} \frac1N \sum_{n=1}^N \bigl( I_n(\iii) - H_{n-1}(\iii) \bigr) = 0
  \end{align*}
  for $\nu$-almost all $\iii \in \Sigma_\infty$ (and thus for $\mu$-almost all $x_\iii\in E$).
  Similarly, defining $D_n(\iii) = \log c_{\iii|_n}$ and $J_n(\iii) = \sum_{\jjj \prec \iii|_n} p_\jjj\log c_\iii$, we have
  \[
    \lim_{N \to \infty} \frac1N \Bigl( \log\diam(E_{\iii|_N}) - \sum_{n=1}^N \sum_{\jjj \prec \iii|_n} p_\jjj\log c_\jjj \Bigr) = \lim_{N \to \infty} \frac1N \sum_{n=1}^N \bigl( D_n(\iii) - J_n(\iii) \bigr)=0
  \]
  for $\mu$-almost all $x_\iii \in E$.
  Thus, writing
  \begin{equation*}
    \frac{\log\mu(E_{\iii|_N})}{\log\diam(E_{\iii|_N})} = \frac{\sum_{n=1}^N \sum_{\jjj \prec \iii|_n} p_\jjj\log p_\jjj - \bigl( \sum_{n=1}^N \sum_{\jjj \prec \iii|_n} p_\jjj\log p_\jjj - \log\mu(E_{\iii|_N}) \bigr)}{\sum_{n=1}^N \sum_{\jjj \prec \iii|_n} p_\jjj\log c_\jjj - \bigl(\sum_{n=1}^N \sum_{\jjj \prec \iii|_n} p_\jjj\log c_\jjj - \log\diam(E_{\iii|_N}) \bigr)},
  \end{equation*}
and taking \eqref{diamspeed} into account, we observe that
  \[
    \lim_{N \to \infty} \biggl( \frac{\log\mu(E_{\iii|_N})}{\log\diam(E_{\iii|_N})} - \frac{\sum_{n=1}^N \sum_{\jjj \prec \iii|_n} p_\jjj\log p_\jjj}{\sum_{n=1}^N \sum_{\jjj \prec \iii|_n} p_\jjj\log c_\jjj} \biggr) = 0
  \]
  for $\mu$-almost all $x_\iii \in E$.
  The claim now follows from Lemma \ref{thm:moran_dim}.
\end{proof}

We do not know how sharp the above result is and would it be possible to remove e.g.\ the assumption \eqref{L2bound}. The following example shows why the above method fails if \eqref{L2bound} is violated.

\begin{example}
Let $I=\{0,1\}$, $\Sigma=\bigcup_{n\in\N}I^n$ and set $p_{\iii}=\tfrac12$ for all $\iii\in\Sigma$ and further for each $\iii\in I^n$, let $c_{\iii0}=e^{-1}$, $c_{\iii1}=e^{-n}$. Then, in the notation of the proof of Theorem \ref{entropyaverages}, $|D_N(\iii)-J_N(\iii)|$ is of order $N$ for each $N$ so that $N^{-1}\sum_{n=1}^N(D_n(\iii)-J_n(\iii))$ fails to converge for all $\iii\in\Sigma_\infty$.
\end{example}

We will now turn to the dimension of the limit set $E$. In Euclidean spaces, the dimensions of spatially symmetric Moran constructions have been studied e.g.\ in \cite{FengWenWu1997, HollandZhang2013, WangWu2008, WenWu2005}. In the metric setting, the dimensional properties of a special class of ``asymptotically self similar'' Moran constructions were studied in \cite{KaenmakiRajalaSuomala2012}. We shall give a result for asymptotically homogeneous Moran sets in complete doubling metric spaces. We first present a lemma that reduces the amount of technicalities by allowing us to consider the spatially symmetric case only.

\begin{lemma}\label{lem:reduction}
Let $\{E_\iii\}_{\iii\in\Sigma}$ be an asymptotically spatially symmetric Moran construction with the defining weights $c_{k,i}$, $k\in\N$, $i\in \{1,2,\cdots, N_k\}$. Define a metric $\rho$ on $\Sigma_\infty$ by $\rho(\iii,\jjj)=\prod_{k=1}^n c_{k,i_k}$, provided $\iii\wedge\jjj=i_1\cdots i_n$, $\rho(\iii,\iii)=0$ and $\rho(\iii,\jjj)=1$ if $\iii\wedge\jjj=\varnothing$. Equipping $\Sigma_\infty$ with this metric, we have
\[\dimh(\Sigma_\infty)=\dimh(E)\quad\text{and}\quad\dimp(\Sigma_\infty)=\dimp(E).\]
Furthermore, $\diam_\rho([i_1\cdots i_n])=c_{1,i_1}\times\cdots\times c_{n,i_n}$ for all $i_1\cdots i_n\in\Sigma_n$. In particular, $\Sigma_\infty$ is spatially symmetric with the defining weights $c_{k,i}$.
\end{lemma}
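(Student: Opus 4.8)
The plan is to build a bi-Lipschitz-type comparison between $(\Sigma_\infty,\rho)$ and $E$ via the projection $\Pi$, but only \emph{up to logarithmically negligible factors}, which is exactly enough to preserve Hausdorff and packing dimension. First I would record the easy structural facts about $\rho$: checking that $\rho$ is an ultrametric is routine from the definition via the longest common prefix $\iii\wedge\jjj$, and the formula $\diam_\rho([i_1\cdots i_n]) = \prod_{k=1}^n c_{k,i_k}$ follows immediately since in an ultrametric the diameter of the cylinder $[i_1\cdots i_n]$ is the $\rho$-distance between any two of its points with common prefix exactly $i_1\cdots i_n$ (and such points exist because each $\iii\in\Sigma$ has at least two offspring, or if it has one offspring the relevant reduction still goes through). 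This also makes $\Sigma_\infty$ spatially symmetric with weights $c_{k,i}$ by definition, giving the last sentence for free.

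Next I would relate the $\rho$-diameter of a cylinder to the Euclidean-type diameter $\diam(E_{\iii|_n})$. By telescoping, $\diam(E_{\iii|_n}) = \ccc_{\iii|_n}\diam(E_\varnothing) = \diam(E_\varnothing)\prod_{j=1}^n c_{\iii|_j}$, whereas $\diam_\rho([\iii|_n]) = \prod_{j=1}^n c_{j,i_j}$. The assumption of asymptotic spatial symmetry, equation \eqref{def:ass}, says precisely that
\[
\lim_{n\to\infty}\frac{\sum_{j=1}^n\log c_{\iii|_j}}{\sum_{j=1}^n\log c_{j,i_j}}=1
\]
for every $\iii\in\Sigma_\infty$, i.e.\ $\log\diam(E_{\iii|_n})/\log\diam_\rho([\iii|_n])\to 1$. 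Combined with \ref{M-inradius}, which gives $B(x_\iii, C_0\diam(E_\iii))\subset E_\iii\subset B(x_\iii,\diam(E_\iii))$, and Lemma \ref{thm:moran_dim} applied on both spaces (the Moran construction on $\Sigma_\infty$ being $\{[\iii]\}_{\iii\in\Sigma}$, trivially nested, disjoint, with exact inradius), I get that for every measure $\mu$ on $E$ with pullback $\nu=\mu\circ\Pi$ on $\Sigma_\infty$,
\[
\udimloc(\nu,\iii)=\udimloc(\mu,x_\iii),\qquad \ldimloc(\nu,\iii)=\ldimloc(\mu,x_\iii)
\]
for $\mu$-a.e.\ $x_\iii$, since both are computed as $\limsup$ / $\liminf$ of $\log\mu(E_{\iii|_n})/\log\diam(E_{\iii|_n})$ and the denominators differ asymptotically by a factor $\to 1$. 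The same comparison with the local dimensions on $\Sigma_\infty$ completes the dictionary.

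Then the dimension identities follow from the Frostman-type formulas \eqref{Frostman:Hausdorff} and \eqref{Frostman:packing}. Given a measure $\mu$ with $\spt(\mu)\subset E$, its pullback $\nu$ satisfies $\spt(\nu)\subset\Sigma_\infty$ and, by the previous paragraph, $\udimh(\nu)=\udimh(\mu)$ and likewise for the lower Hausdorff and the two packing dimensions of measures; conversely every measure on $\Sigma_\infty$ pushes forward to a measure on $E$ with matching dimensions. Taking suprema over all such measures and invoking \eqref{Frostman:Hausdorff}, \eqref{Frostman:packing} on both $E$ and $\Sigma_\infty$ yields $\dimh(E)=\dimh(\Sigma_\infty)$ and $\dimp(E)=\dimp(\Sigma_\infty)$. (One should note that $\Sigma_\infty$ with $\rho$ is a complete doubling metric space — doubling because $\{E_\iii\}$ is a Moran construction in a doubling space, so the branching numbers $\#\{\jjj\prec\iii\}$ are uniformly controlled via \ref{M-inradius} and Lemma \ref{thm:covering_thm} — so that the hypotheses of the quoted results are met.)

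The main obstacle I anticipate is not the dimension bookkeeping but verifying cleanly that the pullback construction $\{[\iii]\}_{\iii\in\Sigma}$ on $(\Sigma_\infty,\rho)$ genuinely satisfies the Moran axioms \ref{M-nested}--\ref{M-logdiam} — in particular \ref{M-logdiam}, which requires $\log\diam_\rho([\iii|_n])/\log\min_{\jjj\prec\iii|_n}\diam_\rho([\jjj])\to 1$ uniformly. This needs that $\min_i c_{n,i}$ is not too small relative to the accumulated product, which one extracts from the corresponding property \ref{M-logdiam} of the original construction together with \eqref{def:ass}; making the "uniformly in $\iii$" quantifier match up is the delicate point. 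Once that is in place, Lemma \ref{thm:moran_dim} applies verbatim on $\Sigma_\infty$ and the rest is the routine comparison sketched above.
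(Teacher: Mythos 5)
Your proposal is correct and follows essentially the same route as the paper: both identify the local dimensions of a measure $\mu$ on $E$ and of its pullback $\nu$ on $(\Sigma_\infty,\rho)$ by applying Lemma \ref{thm:moran_dim} together with \eqref{def:ass}, and then conclude via the Frostman-type variational principles \eqref{Frostman:Hausdorff} and \eqref{Frostman:packing}. The additional care you take in verifying that the cylinder filtration on $(\Sigma_\infty,\rho)$ satisfies the Moran axioms and that $(\Sigma_\infty,\rho)$ is a complete doubling space is detail the paper leaves implicit.
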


\begin{proof}
The statement about the diameters and that $\Sigma_\infty$ is spatially symmetric with the defining weights $c_{k,i}$ is clear from the construction. Let $\nu$ be a measure on $\Sigma_\infty$ and set $\mu=\Pi\nu$. From Lemma \ref{thm:moran_dim} and \eqref{def:ass} we infer that for $\nu$-almost all $\iii\in\Sigma$ (and thus for $\mu$-almost all $x_\iii\in E$), we have
\[\ldimloc(\nu,\iii)=\liminf_{n \to \infty} \frac{\log\nu([\iii|_n])}{\sum_{k=0}^n\log c_{k,i_k}}=\liminf_{n \to \infty} \frac{\log\mu(E_{\iii|_n})}{\log\diam(E_{\iii|_n})}=\ldimloc(\mu, x_\iii)\,.\]
In particular, $\dimh(\nu)=\dimh(\mu)$. Similarly $\dimp(\nu)=\dimp(\mu)$. The claims now follow by recalling \eqref{Frostman:Hausdorff} and \eqref{Frostman:packing}.
\end{proof}

The following theorem gives a formula for the dimension of the limit set of a spatially symmetric Moran construction. 

\begin{theorem}\label{Morandim}
Suppose that $X$ is a complete doubling metric space and $\{E_\iii\}_{\iii \in \Sigma}$ is an asymptotically spatially symmetric Moran construction.
Then $\dimh(E)=s_*$ and $\dimp(E)=\dimb(E)=s^*$ where $s_*$ and $s^*$ are the lower and upper limits, respectively, of the sequence $(s_n)$
defined by the formula
\[\prod_{k=1}^n\sum_{i=1}^{N_k}c_{k,i}^{s_n}=1.\]
\end{theorem}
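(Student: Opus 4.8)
By Lemma~\ref{lem:reduction}, it suffices to prove the theorem when $X=\Sigma_\infty$ is spatially symmetric with the defining weights $c_{k,i}$, so that $\diam([i_1\cdots i_n])=c_{1,i_1}\cdots c_{n,i_n}$ for all words. The plan is to produce matching measures and covers. For the lower bound $\dimh(E)\ge s_*$, I would fix $t<s_*$, choose $n_0$ with $s_n>t$ for all $n\ge n_0$, and define a Bernoulli-type measure $\mu$ on $\Sigma_\infty$ by assigning to the offspring $i$ of a word of length $k$ the conditional probability proportional to $c_{k,i}^{t}$ (for $k$ large; the finitely many initial levels only affect constants). Concretely, set $p_{\iii i}=c_{k,i}^{t}/\sum_{j=1}^{N_{k+1}}c_{k+1,j}^{t}$ for $\iii\in\Sigma_k$. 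Then $\mu([i_1\cdots i_n])=\prod_{k=1}^n c_{k,i_k}^{t}\big/\prod_{k=1}^n\sum_{j}c_{k,j}^{t}$, and since $\prod_{k=1}^n\sum_j c_{k,j}^{s_n}=1$ with $s_n>t$, the denominator is bounded above by $1$ (monotonicity of $s\mapsto\sum_j c_{k,j}^s$), so $\mu([i_1\cdots i_n])\ge \diam([i_1\cdots i_n])^{t}$. Hence $\log\mu(E_{\iii|_n})/\log\diam(E_{\iii|_n})\le t$ for every $\iii$, giving $\ldimloc(\mu,x_\iii)\le t$; wait --- I want a \emph{lower} bound, so I instead argue that the denominator is also bounded \emph{below} away from $0$ along a subsequence where $s_n$ is close to $s_*$, or more cleanly: since $s_n\to s_*$ on a subsequence and $s_n\ge$ any fixed $t<s_*$ eventually, the product $\prod_{k=1}^n\sum_j c_{k,j}^t$ stays bounded between two positive constants, whence $\log\mu(E_{\iii|_n})/\log\diam(E_{\iii|_n})\to t$ and $\dimh(\mu)\ge t$ by Lemma~\ref{thm:moran_dim}; letting $t\uparrow s_*$ and using \eqref{Frostman:Hausdorff} yields $\dimh(E)\ge s_*$. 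The packing lower bound $\dimp(E)\ge s^*$ is analogous, taking $t<s^*$, using that $s_n\ge t$ for infinitely many $n$, and bounding $\udimloc(\mu,x_\iii)$ from below via those levels together with \eqref{Frostman:packing}.

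For the upper bounds I would use natural covers. Fix $n$ and cover $E$ by the cylinders $\{[i_1\cdots i_n]\}$, of which there are $\prod_{k=1}^n N_k$, with $\diam([i_1\cdots i_n])=\prod_k c_{k,i_k}$. For the box dimension, $N_\delta(E)$-type counting at scale $\delta$ roughly equal to $\min_{\iii\in\Sigma_n}\diam([i_1\cdots i_n])$ gives, after the standard computation, $\dimb(E)\le s^*$: indeed $\sum_{i_1\cdots i_n}\bigl(\prod_k c_{k,i_k}\bigr)^{s}=\prod_{k=1}^n\sum_{j}c_{k,j}^{s}$, which equals $1$ when $s=s_n$ and is $<1$ when $s>s_n$; combining with the uniformity coming from \ref{M-logdiam} (so that the ratio $\log\max_{\iii}\diam/\log\min_{\iii}\diam\to1$ along the construction, equivalently the scales within level $n$ are comparable on the logarithmic scale) converts the cylinder cover into a genuine $\delta$-cover and shows the box-counting sum is controlled by $\limsup_n s_n=s^*$. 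For the Hausdorff upper bound $\dimh(E)\le s_*$, I would pick a subsequence $n_\ell$ with $s_{n_\ell}\to s_*$ and, for $s>s_*$, use the cylinder cover at level $n_\ell$: $\mathcal H^s$-premeasure is bounded by $\mathrm{const}\cdot\sum_{i_1\cdots i_{n_\ell}}\bigl(\prod_k c_{k,i_k}\bigr)^{s}=\mathrm{const}\cdot\prod_{k=1}^{n_\ell}\sum_j c_{k,j}^{s}$, and since $s>s_{n_\ell}$ eventually this product is $\le 1$ (in fact $\to 0$ if the decay is strict, but $\le 1$ suffices together with the diameters tending to $0$), so $\mathcal H^s(E)<\infty$ and $\dimh(E)\le s$; let $s\downarrow s_*$.

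The genuinely delicate point is matching the intrinsic ``symbolic'' scales $\prod_k c_{k,i_k}$ with the metric diameters $\diam(E_{\iii|_n})$ and, simultaneously, handling the fact that different cylinders at the same combinatorial level $n$ have very different diameters; this is exactly what \ref{M-logdiam} and, on the $\Sigma_\infty$ side, the reduction lemma's exact formula $\diam([i_1\cdots i_n])=\prod_k c_{k,i_k}$ are there to control. After the reduction to $\Sigma_\infty$ the diameters are \emph{exactly} the products, which removes one layer of error; the remaining care is that to pass from a cover by combinatorial cylinders of level $n$ to an honest cover at a single scale $\delta$, one regroups: a cylinder of diameter much larger than $\delta$ is refined until its descendants have diameter $\le\delta$, and one must check that this refinement does not blow up the $s$-sum --- but $\sum_{\jjj\prec\iii}\diam([\iii\jjj])^{s}=\diam([\iii])^{s}\sum_{j}c_{|\iii|+1,j}^{s}$, which is $\le\diam([\iii])^{s}$ for $s\ge s_n$, so the sum only decreases. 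I would also note at the outset that the sequence $(s_n)$ is well-defined and lies in a bounded interval: $s\mapsto\prod_{k=1}^n\sum_{i=1}^{N_k}c_{k,i}^{s}$ is continuous, strictly decreasing, tends to $\prod N_k\ge 2^n$ as $s\to0^+$ and to $0$ as $s\to\infty$, so a unique root $s_n$ exists; uniform bounds $0<c_{k,i}<1$ together with the doubling property (which bounds the $N_k$ through Lemma~\ref{thm:covering_thm}) keep $(s_n)$ in a fixed compact subinterval of $(0,\infty)$, so $s_*,s^*$ are finite and positive. Assembling: $\dimh(E)=s_*$ from the two bounds, and $s^*\le\dimp(E)\le\dimb(E)\le s^*$ (using $\dimp\le\dimb$ for compact sets, or the finer packing-measure argument paralleling the Hausdorff one along the full sequence rather than a subsequence), giving $\dimp(E)=\dimb(E)=s^*$.
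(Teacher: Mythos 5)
Your lower bounds take a genuinely different route from the paper: you build, for each $t<s_*$ (resp.\ $t<s^*$), a Bernoulli-type measure with offspring weights proportional to $c_{k,i}^t$ and invoke \eqref{Frostman:Hausdorff}--\eqref{Frostman:packing}, whereas the paper deliberately avoids measures here (see its Remark on the absence of a natural Gibbs measure) and instead runs a direct covering/packing argument through the combinatorial interpolation lemma of Hua--Rao--Wen--Wu (Lemma~\ref{lemma:Huaetal}). Your measure idea does work, but your justification of it is garbled: the normalising product $D_n=\prod_{k=1}^n\sum_j c_{k,j}^t$ is bounded \emph{below} by $1$ when $t<s_n$ (each factor increases as the exponent decreases), not above, and it certainly need not ``stay bounded between two positive constants'' --- in the homogeneous case it equals $(Nc^t)^n\to\infty$ for $t<s_*$. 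The correct and sufficient statement is simply $D_n\ge 1$, hence $\mu_t([\iii|_n])\le\diam([\iii|_n])^t$ and $\log\mu_t([\iii|_n])/\log\diam([\iii|_n])\ge t$ for all $n$ with $s_n>t$; combined with Lemma~\ref{thm:moran_dim} this gives $\ldimloc(\mu_t,\cdot)\ge t$ a.e.\ (resp.\ $\udimloc(\mu_t,\cdot)\ge t$ along the infinitely many such $n$), which is what you need. With that repair, this half is correct and arguably cleaner than the paper's packing-premeasure argument.

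The upper bound $\dimb(E)\le s^*$, however, has a genuine gap. You assert that \ref{M-logdiam} forces the level-$n$ diameters to be logarithmically comparable to one another; it does not. Condition \ref{M-logdiam} only compares a cylinder with its \emph{own offspring}; with $c_{k,1}=\tfrac12$ and $c_{k,2}=\tfrac14$ for all $k$ it is satisfied, yet $\min_{\iii\in\Sigma_n}\ccc_\iii=4^{-n}$ while $\max_{\iii\in\Sigma_n}\ccc_\iii=2^{-n}$, so a single combinatorial level never produces an honest cover at one scale, and the naive count $\#\Sigma_n$ at scale $\max_\iii\ccc_\iii$ overshoots $s^*$ (it gives exponent $1$ versus $s^*\approx 0.694$ in this example). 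One is therefore forced to work with the mixed-level stopping family $\{\iii:\ccc_\iii\le\delta<\ccc_{\iii^-}\}$, and your device for controlling its $s$-sum --- that refining one level multiplies the contribution by $\sum_j c_{k,j}^s\le 1$ for $s\ge s_n$ --- is false: only the \emph{product} $\prod_{k=1}^n\sum_j c_{k,j}^{s_n}$ equals $1$, and individual factors may well exceed $1$ (e.g.\ $\sum_j c_{1,j}^{s_2}=2$, $\sum_j c_{2,j}^{s_2}=\tfrac12$), so a one-step refinement can increase the sum. This is precisely the difficulty that Lemma~\ref{lemma:Huaetal}(\ref{2}) is designed to resolve: for a \emph{disjoint} mixed-level family with lengths in $[k_1,k_2]$ there exists some single level $k$ in that range whose full sum dominates, and that full sum is $\le 1$ for $s>s_k$. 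Without this lemma (or an equivalent interpolation argument) your box-dimension bound does not close; the Hausdorff upper bound $\dimh(E)\le s_*$, by contrast, is fine as you state it, since there single-level cylinder covers along a subsequence with $s_{n_\ell}\to s_*$ suffice.
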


This theorem extends the results of Hua, Rao, Wen, and Wu \cite{HuaRaoWenWu2000} in several different ways. Most importantly, instead of a Euclidean space, the result applies in any complete doubling space, and the notion of spatial symmetry is also relaxed. In the proof, we will make use of the similar ideas as in \cite{HuaRaoWenWu2000}. In particular, we apply the following lemma. Recall the notation $\ccc_\iii$ from \eqref{ccc:def}.

\begin{lemma}\label{lemma:Huaetal}
\begin{enumerate}
\item\label{1} Suppose that in the setting of Theorem \ref{Morandim}, $\{E_\iii\}$ is spatially symmetric. Let $s>0$. Suppose that $\Xi$ is a finite collection of words $\iii\in\bigcup_{k_1\le k\le k_2}\Sigma_k$ with $E\subset\bigcup_{\iii\in\Xi}E_\iii$. Then there is $k_1\le k\le k_2$ such that $\sum_{\iii\in\Sigma_k}\ccc_{\iii}^s\le\sum_{\iii\in\Xi}\ccc_{\iii}^s$.
\item\label{2} If in addition, the elements $E_{\iii}$, $\iii\in\Xi$, are pairwise disjoint, then there is $k_1\le k\le k_2$ such that $\sum_{\iii\in\Sigma_k}\ccc_{\iii}^s\ge\sum_{\iii\in\Xi}\ccc_{\iii}^s$.
\end{enumerate}
\end{lemma}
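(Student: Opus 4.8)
The lemma is a combinatorial statement about the spatially symmetric case, where the numbers $\ccc_\iii$ depend only on the length $|\iii|$ through the weights $c_{k,i}$; in fact $\sum_{\iii\in\Sigma_k}\ccc_\iii^s = \prod_{j=1}^k\sum_{i=1}^{N_j}c_{j,i}^s$. The key elementary fact to extract first is a \emph{one-step substitution identity}: for any word $\iii\in\Sigma_k$, $\sum_{\jjj\prec\iii}\ccc_\jjj^s = \ccc_\iii^s\sum_{i=1}^{N_{k+1}}c_{k+1,i}^s = \ccc_\iii^s\,\theta_{k+1}$, where $\theta_{k+1}:=\sum_{i=1}^{N_{k+1}}c_{k+1,i}^s$. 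Thus replacing a word by the collection of its offspring multiplies the corresponding $\ccc^s$-sum by the single scalar $\theta_{k+1}$, which is the same factor no matter which word is subdivided. This homogeneity is exactly what makes the averaging argument work.

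For part (\ref{1}), I would argue as follows. Since $\Xi$ is finite and $E\subset\bigcup_{\iii\in\Xi}E_\iii$ with all words of length in $[k_1,k_2]$, I first want to reduce to the case $\Xi\subset\Sigma_{k_2}$: repeatedly take a word $\iii\in\Xi$ of minimal length $k<k_2$ and replace it by its full set of offspring $\{\jjj:\jjj\prec\iii\}$; the covering property of $E$ is preserved (because $E_\iii\cap E = \bigcup_{\jjj\prec\iii}E_\jjj\cap E$, using \ref{M-nested} and the fact that every point of $E_\iii\cap E$ lies in some offspring), and by the substitution identity the sum $\sum_{\iii\in\Xi}\ccc_\iii^s$ changes by the factor $\theta_{k+1}$ — but crucially it might \emph{decrease} if $\theta_{k+1}<1$. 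To handle this cleanly, instead I track the quantity $\sum_{\iii\in\Xi}\ccc_\iii^s\big/\prod_{j=1}^{|\iii|}\theta_j$ — wait, the lengths differ. The cleaner route: define for a cover $\Xi$ the "normalized" weight by observing that after fully refining $\Xi$ down to level $k_2$ one obtains a cover $\Xi'\subset\Sigma_{k_2}$ with $E\subset\bigcup_{\iii\in\Xi'}E_\iii$, hence (by disjointness of distinct words at the same level, \ref{M-disjoint}, plus $E\subset\bigcup_{\iii\in\Sigma_{k_2}}E_\iii$ with each $E_\iii$, $\iii\in\Sigma_{k_2}$, meeting $E$) in fact $\Xi'=\Sigma_{k_2}$. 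So the refinement process starts at $\sum_{\iii\in\Xi}\ccc_\iii^s$ and ends at $\sum_{\iii\in\Sigma_{k_2}}\ccc_\iii^s$. Now I claim that along the way, at least one of the \emph{levels} $k_1,\dots,k_2$ has its full-level sum $\sum_{\iii\in\Sigma_k}\ccc_\iii^s$ bounded above by $\sum_{\iii\in\Xi}\ccc_\iii^s$. If this failed, then $\sum_{\iii\in\Sigma_k}\ccc_\iii^s > \sum_{\iii\in\Xi}\ccc_\iii^s$ for every $k\in[k_1,k_2]$; I will derive a contradiction by a telescoping/averaging comparison, writing $\sum_{\iii\in\Xi}\ccc_\iii^s$ as a convex-type combination (over the distinct lengths $k$ occurring in $\Xi$, weighted by the partial $\ccc^s$-mass sitting at each length) of the quantities $\sum_{\iii\in\Sigma_k}\ccc_\iii^s\cdot(\text{fraction of that level covered by }\Xi)$, and using that $\Xi$ covers $E$ to see these fractions sum appropriately. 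Part (\ref{2}) is dual: now $\Xi$ is a \emph{packing-type} family (pairwise disjoint $E_\iii$) and I refine upward again to $\Sigma_{k_2}$; disjointness is preserved and $\sum_{\iii\in\Sigma_{k_2}}\ccc_\iii^s\ge$ the packed sum, giving by the same averaging that some level sum is $\ge\sum_{\iii\in\Xi}\ccc_\iii^s$.

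A cleaner and more robust way to run both halves uniformly, which I would ultimately prefer to write up: use the substitution identity to reduce everything to the statement that for numbers $\theta_1,\dots,\theta_{k_2}>0$ and a finite "cut" $\Xi$ of the tree $\bigcup_{k_1\le k\le k_2}\Sigma_k$ that is a cover (resp.\ an antichain), one has $\min_{k_1\le k\le k_2}\prod_{j=1}^k\theta_j\le\sum_{\iii\in\Xi}\prod_{j=1}^{|\iii|}\theta_j$ up to — no; better to invoke directly the combinatorial lemma in \cite{HuaRaoWenWu2000} whose proof carries over verbatim once the quantities $\ccc_\iii^s$ are recognized to satisfy the recursion $\ccc_\iii^s\prod_{j}(\cdots)$; since the statement asserts "we apply the following lemma" in the style of that reference, the honest write-up is: restate the elementary tree inequality, observe $\ccc_\iii^s = \prod_{j=1}^{|\iii|}c_{j,i_j}^s$ factorizes, and mimic the argument of \cite{HuaRaoWenWu2000} with the $c_{j,i}^s$ in place of the ratios used there.

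**Main obstacle.** The substantive point — and the step I expect to be delicate — is the averaging/pigeonhole argument showing that \emph{some} full level $\Sigma_k$ with $k_1\le k\le k_2$ dominates (resp.\ is dominated by) the mixed-length sum over $\Xi$: one must correctly weight the contributions of the various lengths present in $\Xi$ and use the covering (resp.\ disjointness) hypothesis to control what fraction of each level is "seen" by $\Xi$, since $\Xi$ need not be a single antichain at a fixed level. Everything else (the factorization of $\ccc_\iii^s$, the one-step substitution identity, preservation of the covering/packing property under refinement toward level $k_2$, and the identification of a full-level cover/packing with $\Sigma_{k_2}$ itself) is routine given \ref{M-nested}, \ref{M-disjoint}, and the definition of $E$.
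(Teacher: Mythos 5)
The paper's own proof of this lemma is a one-line citation to Lemma 2.1 and Remark 2.1 of \cite{HuaRaoWenWu2000}, so your eventual fallback (``recognize that $\ccc_\iii^s$ factorizes and mimic that argument'') coincides with what the authors actually do, and your setup --- the factorization $\ccc_\iii^s=\prod_{j=1}^{|\iii|}c_{j,i_j}^s$, the one-step substitution identity, and the refinement of $\Xi$ toward level $k_2$ --- is exactly the right machinery. As a self-contained proof, however, the proposal stops at precisely the point you label the ``main obstacle'': the pigeonhole step producing a single level $k$ is never executed, and the sketch of a ``convex-type combination over the distinct lengths'' is not yet an argument.

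That step is in fact not delicate, and it closes in two lines if you refine to level $k_2$ in one shot instead of one generation at a time (which is what creates your worry about $\theta_{k+1}<1$). Put $\theta_j=\sum_{i=1}^{N_j}c_{j,i}^s$ and $T_k=\sum_{\iii\in\Sigma_k}\ccc_\iii^s=\prod_{j=1}^k\theta_j$, and for $\iii\in\Xi$ let $D(\iii)=\{\jjj\in\Sigma_{k_2}:\jjj|_{|\iii|}=\iii\}$, so that $\sum_{\jjj\in D(\iii)}\ccc_\jjj^s=\ccc_\iii^s\,T_{k_2}/T_{|\iii|}$. For part \eqref{1}, the covering hypothesis gives $\Sigma_{k_2}=\bigcup_{\iii\in\Xi}D(\iii)$: every $E_\jjj$ with $\jjj\in\Sigma_{k_2}$ meets $E$ (the codetree always branches), hence meets some $E_\iii$ with $\iii\in\Xi$, and disjointness of distinct same-level sets (from \ref{M-nested} and \ref{M-disjoint}) forces $\iii=\jjj|_{|\iii|}$. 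Therefore
\[
T_{k_2}\;\le\;\sum_{\iii\in\Xi}\sum_{\jjj\in D(\iii)}\ccc_\jjj^s\;=\;\sum_{\iii\in\Xi}\ccc_\iii^s\,\frac{T_{k_2}}{T_{|\iii|}}\;\le\;\frac{T_{k_2}}{\min_{k_1\le k\le k_2}T_k}\sum_{\iii\in\Xi}\ccc_\iii^s,
\]
which is exactly $\min_{k_1\le k\le k_2}T_k\le\sum_{\iii\in\Xi}\ccc_\iii^s$. For part \eqref{2}, pairwise disjointness of the $E_\iii$, $\iii\in\Xi$, makes the sets $D(\iii)$ pairwise disjoint subsets of $\Sigma_{k_2}$, and the same computation with the inequalities reversed and $\min$ replaced by $\max$ gives $\max_{k_1\le k\le k_2}T_k\ge\sum_{\iii\in\Xi}\ccc_\iii^s$. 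No induction over generations and no weighting of the partial masses sitting at each length is needed; I would recommend writing this short argument out rather than deferring to \cite{HuaRaoWenWu2000}.
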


\begin{proof}
The proof is identical to that of Lemma 2.1 and Remark 2.1 in \cite{HuaRaoWenWu2000}.
\end{proof}

\begin{proof}[Proof of Theorem \ref{Morandim}]
Using Lemma \ref{lem:reduction}, we may assume that the Moran construction is spatially symmetric and that $\diam(E_{\iii})=\ccc_\iii$ for all $\iii\in\Sigma_*$. The estimate $\dimh(E)\le s_*$ is now immediate from the definition of $s_*$.

To prove that $\dimh(E)\ge s_*$, fix $\varepsilon>0$. Let $s<s_*-\varepsilon$ and $\delta>0$. Let $\mathcal{B}=\{B_1,B_2,\ldots\}$ be a covering of $E$ with balls of radius at most $\delta$. For each $B=B_i$, denote
\[\Upsilon(B)=\left\{\iii\in\Sigma : E_\iii\cap B\neq\emptyset \text{ and } \ccc_\iii\le \diam(B)<\ccc_{\iii^{-}}\right\}.\]
From \ref{M-logdiam}, it follows that for any $c>0$, if $\delta>0$ is small enough (depending on $c,\varepsilon$), then
$\diam(E_\iii)\ge\diam(B)^{1+c\varepsilon}$ for all $\iii\in\Upsilon(B)$.
Combining this with Lemma \ref{thm:covering_thm}, and choosing $c>0$ small enough it follows that
\[\#\Upsilon(B)\le\diam(E_\iii)^{-\varepsilon}\,,\]
for all $B\in\mathcal{B}$ and any $\iii\in\Upsilon(B)$.
Thus, we will have
\begin{equation}\label{seps}
\begin{split}
\sum_{B\in\mathcal{B}}\diam(B)^s&\ge \sum_{i=1}^\infty\max\{\diam(E_\iii)^{\varepsilon}\,:\,\iii\in\Upsilon(B_i)\}\sum_{\iii\in\Upsilon(B_i)}\diam(E_{\iii})^s\\
&\ge\sum_{B\in\mathcal{B}}\sum_{\iii\in\Upsilon(B)}\diam(E_\iii)^{s+\varepsilon}
\ge\sum_{\iii\in\Sigma_k}\ccc_{\iii}^{s+\varepsilon}\,,
\end{split}
\end{equation}
where the last estimate follows from Lemma \ref{lemma:Huaetal}\eqref{1} for some
$k\ge\min\{|\iii| : \iii\in\Upsilon(B) \text{ and } B\in\mathcal{B}\}$. Note that by compactness, we only need to consider the case when $\mathcal{B}$ is finite so that Lemma \ref{lemma:Huaetal} applies. Since $k\rightarrow\infty$ as $\delta\downarrow 0$, we observe that the right-hand side of \eqref{seps} is $\ge 1$ when $\delta>0$ is small enough. This implies $\mathcal{H}^s(E)>0$ and thus, letting $\varepsilon\downarrow 0$, we get $\dim_H(E)\ge s_*$.

Next, let us turn to the claims on box and packing dimension. Suppose that $\delta>0$ and let $\mathcal{B}$ be a $\delta$-packing of $E$. For each $n\in\N$, let
\[\Psi(n)=\{\iii\in\Sigma_n\,:\,\ccc_{\iii}<\delta\le\ccc_{\iii^{-}}\text{ and }E_\iii\subset B\text{ for some }B\in\mathcal{B}\}\,.\]
 From \ref{M-logdiam} it follows that for all large $k_1\in\N$ (depending on $\varepsilon$ only) such that $\delta^{s^*+2\varepsilon}\le\ccc_{\iii}^{s^*+\varepsilon}$ for each $\iii\in\Sigma_n$, $n\ge k_1$. Further, if $\delta$ is small enough (depending on $k_1$), then $\Psi(n)=\varnothing$ for $n< k_1$. Also, by \ref{M-diam}, there is $k_2\in\N$ (depending on $\delta$) such that $\Psi(n)=\varnothing$ for all $n>k_2$.  Thus
\begin{align}\label{sepslb}
 \#\mathcal{B}\delta^{s^*+2\varepsilon}\le\sum_{n=k_1}^{k_2}\sum_{\iii\in\Psi(n)}\ccc_\iii^{s^*+\varepsilon}
 \le\sum_{\iii\in\Sigma_k}\ccc_{\iii}^{s^*+\varepsilon}\,,
\end{align}
where the last estimate follows from Lemma \ref{lemma:Huaetal}\eqref{2} for some $k_1\le k\le k_2$. But for $k_1$ large enough (i.e.\ for $\delta$ small enough), the upper bound in \eqref{sepslb} is at most $1$ and this leads to the estimate $\dimb(E)\le s^*+\varepsilon$. Hence, letting $\varepsilon\downarrow 0$, we get $\dim_B(E)\le s^*$.

Finally, using \ref{M-inradius}, for each $n$, we may construct balls $B_\iii\subset E_\iii$, $\iii\in\Sigma_n$, such that
\[\sum_{\iii\in\Sigma_n}\diam(B_\iii)^{s_n}\ge C_{0}^{-1}\sum_{\iii\in\Sigma_n}\ccc_{\iii}^{s_n}=C_0^{-1}\,.\]
Letting $n\rightarrow\infty$, this implies $\mathcal{P}_{0}^s(E)>0$ for each $s<s^*$, where $\mathcal{P}_0^s$ is the packing premeasure (see \cite{Falconer1997}). Using the definition of spatial symmetry, this readily generalizes to $\mathcal{P}_{0}^s(E\cap U)>0$ for any open set $U\subset X$ with $U\cap E\neq\emptyset$ and hence $\dimp(E)\ge s_0$ (see \cite[Corollary 3.9]{Falconer1990}).
\end{proof}

We say that a Moran construction is \emph{(asymptotically) homogeneous}, if it is (asymptotically) spatially symmetric and for each $k \in \N$ there exists $0 < c_k < 1$ so that $c_{k,i} = c_k$ for all $i \in \{ 1,\ldots,N_k \}$.

\begin{remark}\label{rm}
(1) If $E$ in Theorem \ref{thm:moran_dim} is asymptotically homogeneous, then the formulas for $\dimh(E)$ and $\dimp(E)$ simplify to
\begin{align*}
\dimh(E) &= \liminf_{n\rightarrow\infty}\frac{\sum_{k=1}^n\log N_k}{-\sum_{k=1}^n\log c_k}, \\
\dimp(E) &= \limsup_{n\rightarrow\infty}\frac{\sum_{k=1}^n\log N_k}{-\sum_{k=1}^n\log c_k}.
\end{align*}
This generalizes the results of \cite{FengRaoWu1997, Hua1994, HuaLi1996}.

(2) For self-similar and related sets, the simplest way to obtain the dimension of the set is to consider the local dimensions of a ``natural measure'' on the set and then use \eqref{Frostman:Hausdorff}, \eqref{Frostman:packing}. One might hope to use Theorem \ref{entropyaverages} to give such a ``measure proof'' for Theorem \ref{thm:moran_dim}. However, in the present setting this does not seem appropriate since the ``contractions'' $\{c_{k,i}\}_i$ may vary freely as $k$ changes, and there is no natural Gibbs measure that would catch the correct dimension for $E$ (see also \cite{WuXiao2011}).
\end{remark}

We say that a measure $\mu$ on the limit set of an asymptotically homogeneous Moran construction is \emph{uniformly distributed} if
\begin{equation*}
  \mu(E_\iii) = \prod_{k=1}^{n} N_k^{-1}
\end{equation*}
for all $\iii \in \Sigma_n$ and $n \in \N$.

We conclude with the following indication of monofractality for the uniformly distributed measures on homogeneous Moran sets.

\begin{proposition}\label{dimqlowerupper}
  Suppose that $X$ is a complete doubling metric space, $\{ E_\iii \}_{\iii \in \Sigma}$ is an asymptotically homogeneous Moran construction and $\mu$ is a uniformly distributed measure supported on $E$.
  Then
  \begin{equation} \label{qge1}
    \dim_q(\mu,x)=\dim_q(\mu) = \liminf_{n \to \infty} \frac{\sum_{k=1}^{n} \log N_k}{- \sum_{k=1}^n \log c_k} = s_*
  \end{equation}
  for all $x \in E$ and $q>1$, and
  \begin{equation} \label{qle1}
    \dim_q(\mu,x)=\dim_q(\mu) = \limsup_{n \to \infty} \frac{\sum_{k=1}^n \log N_k}{- \sum_{k=1}^{n} \log c_k} = s^*
  \end{equation}
  for all $x \in E$ and $0<q<1$.
\end{proposition}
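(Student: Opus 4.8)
The plan is to reduce everything to a computation of the filtration sums $\sum_{Q\in\QQ_n(x,r)}\mu(Q)^q$ appearing in Proposition~\ref{tauq}, using the fact that a uniformly distributed measure on an asymptotically homogeneous construction is essentially ``flat'' on each level. First I would invoke Lemma~\ref{lem:reduction} to pass to the spatially symmetric case, so that we may assume $\diam(E_\iii)=\ccc_\iii$ for all $\iii$ and $c_{k,i}=c_k$, $\mu(E_\iii)=\prod_{k=1}^n N_k^{-1}$ for $\iii\in\Sigma_n$. The key point is that all $E_\iii$ with $|\iii|=n$ carry the same mass $m_n:=\prod_{k=1}^n N_k^{-1}$ and have the same diameter $d_n:=\prod_{k=1}^n c_k$. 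Taking the general filtration $\QQ_n=\{E_\jjj:\diam(E_\jjj)\le\gamma_n<\diam(E_{\jjj^-})\}$ from the proof of Lemma~\ref{thm:moran_dim}, the number of cubes $Q\in\QQ_n(x,r)$ meeting a fixed ball $B(x,r)$ is, for $n$ large, comparable (by the doubling property, Lemma~\ref{thm:covering_thm}) to $r^{-s}$-type bounds but more importantly is bounded below by $1$ and above by a constant independent of $n$ once $\gamma_n<r$; and each such $Q$ has $\mu(Q)\approx m_{\ell(Q)}$ where $\ell(Q)$ is its generation, with $m_{\ell(Q)}=\mu(Q)\asymp\mu(E)\cdot$(something) — more usefully, $\log\mu(Q)/\log\diam(Q)\to s_*$ or $s^*$ along subsequences by Lemma~\ref{thm:moran_dim}.

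Concretely, for $q>1$ I would estimate $\sum_{Q\in\QQ_n(x,r)}\mu(Q)^q$ from above by $(\#\QQ_n(x,r))\cdot(\max_Q\mu(Q))^q$ and from below by $(\max_Q\mu(Q))^q$ (keeping just the largest term). Since $\#\QQ_n(x,r)$ is bounded by a constant $C(r)$ independent of $n$, taking $\log$, dividing by $\log\delta_n$, and letting $n\to\infty$ then $r\downarrow0$, the bounded factor $\#\QQ_n(x,r)$ is logarithmically negligible, so by Proposition~\ref{tauq} one gets
\[
\tau_q(\mu,x)=\lim_{r\downarrow0}\liminf_{n\to\infty}\frac{q\log(\max_{Q\in\QQ_n(x,r)}\mu(Q))}{\log\delta_n}.
\]
The cube $Q$ achieving the maximal mass among those meeting $B(x,r)$ is simply the one of smallest generation; as $r\downarrow 0$ and $n\to\infty$ this is forced to be $Q_{\ell}(x)$ for $\ell\to\infty$, with $\log\mu(Q)/\log\delta_n\to\liminf_n\log\mu(Q_n(x))/\log\delta_n=\ldimloc(\mu,x)$. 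By Theorem~\ref{entropyaverages} (or directly, since uniform distribution makes the entropy averages collapse to $\sum\log N_k/(-\sum\log c_k)$) this common value is $s_*=\liminf_n(\sum_{k=1}^n\log N_k)/(-\sum_{k=1}^n\log c_k)$. Dividing $\tau_q(\mu,x)$ by $q-1$ gives $\dim_q(\mu,x)=s_*$ for all $x\in E$ and $q>1$; the global statement $\dim_q(\mu)=s_*$ follows the same way from the global version of Proposition~\ref{tauq} noted in the text (drop $x,r$), using that $\spt(\mu)=E$ is covered by boundedly many generation-$n$ cubes.

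For $0<q<1$ the roles of max and min swap: now $\sum_{Q}\mu(Q)^q$ is dominated, up to the same bounded multiplicative factor $\#\QQ_n(x,r)$, by $(\min_{Q\in\QQ_n(x,r)}\mu(Q))^q$ on the one side and bounded below by $(\min_Q\mu(Q))^q$ on the other; the minimal-mass cube is the one of largest generation, and one extracts $\limsup_n\log\mu(Q_n(x))/\log\delta_n=\udimloc(\mu,x)$, which for the uniformly distributed measure equals $s^*=\limsup_n(\sum_{k=1}^n\log N_k)/(-\sum_{k=1}^n\log c_k)$ by the homogeneous case of Lemma~\ref{thm:moran_dim}/Theorem~\ref{entropyaverages}. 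Since $q-1<0$, dividing $\tau_q(\mu,x)=q\cdot$(that limit) by $q-1$ flips the liminf/limsup appropriately and yields $\dim_q(\mu,x)=s^*$. I expect the main obstacle to be the bookkeeping in the first paragraph: verifying rigorously that $\#\QQ_n(x,r)$ stays bounded as $n\to\infty$ for fixed $r$ (so it drops out of the logarithmic limit) and that the generation of the extremal cube in $\QQ_n(x,r)$ genuinely tends to infinity in the double limit $\lim_{r\downarrow0}\liminf_{n\to\infty}$ — this is where assumptions \ref{F-deltalim}, \ref{F-gammadeltalim} and \ref{M-logdiam} are used to control the ratio $\log\delta_n/\log\diam(Q)$, and care is needed because different $Q\in\QQ_n$ may have different generations. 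Once that is in place the $L^q$-computation is, as indicated, essentially a one-line estimate, and equalities \eqref{qge1}, \eqref{qle1} drop out together with the identification of the limits via Theorem~\ref{entropyaverages} specialized to uniformly distributed weights.
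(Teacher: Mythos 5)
There is a genuine gap, and it sits exactly where your computation pivots: the claim that $\#\QQ_n(x,r)$ is ``bounded below by $1$ and above by a constant independent of $n$ once $\gamma_n<r$'' is false. For fixed $r$ the collection $\QQ_n(x,r)$ consists of level-$n$ sets of diameter roughly $\gamma_n\ll r$ meeting $B(x,r)$, and their number \emph{grows without bound}: as soon as some $E_{\iii|_{n_0}}\subset B(x,r)$, every level-$n$ descendant of $\iii|_{n_0}$ belongs to $\QQ_n(x,r)$, so $\#\QQ_n(x,r)\ge\prod_{j=n_0+1}^{n}N_j$. (Lemma \ref{thm:covering_thm}(1) gives an upper bound of order $c(\delta_n/r)^{-s}$, which also diverges as $n\to\infty$; the doubling bound is constant only when the packing radius is comparable to $r$, not when it is $\delta_n$.) Because of this, the sum $\sum_{Q\in\QQ_n(x,r)}\mu(Q)^q$ is \emph{not} comparable to $(\max_Q\mu(Q))^q$ up to a logarithmically negligible factor: with $m_n=\prod_{k\le n}N_k^{-1}$ one has $\sum_{Q\in\QQ_n(x,r)}\mu(Q)^q=\#\QQ_n(x,r)\cdot m_n^q\asymp\bigl(\prod_{k\le n}N_k\bigr)^{1-q}$, whereas your formula keeps only $m_n^q=\bigl(\prod_{k\le n}N_k\bigr)^{-q}$. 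The difference is exactly the factor $\prod_{k\le n}N_k$ coming from the cardinality, and it is not negligible. Your expression $\tau_q(\mu,x)=\lim_r\liminf_n q\log(\max_Q\mu(Q))/\log\delta_n$ would yield $\dim_q(\mu,x)=\tfrac{q}{q-1}s_*$ rather than $s_*$; already for Lebesgue measure on $[0,1]$ with dyadic cubes it gives $q/(q-1)$ instead of $1$.

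The fix is the opposite observation from the one you made: $\#\QQ_n(x,r)$ is comparable to $\#\QQ_n=\prod_{k\le n}N_k$, i.e.\ $\#\QQ_n(x,r)/\#\QQ_n\ge c(x,r)>0$ uniformly in $n$ (by the descendant argument above), and it is the \emph{ratio}, not the count, that is bounded and hence logarithmically negligible. This is what the paper does, taking $\QQ_n=\{E_\iii:\iii\in\Sigma_n\}$ directly (legitimate here since asymptotic homogeneity makes all level-$n$ diameters logarithmically comparable, avoiding your end-of-proof worry about cubes of different generations). Then
\[
\frac{\log\sum_{Q\in\QQ_n(x,r)}\mu(Q)^q}{\log\gamma_n}
=\frac{(1-q)\sum_{k=1}^n\log N_k+O(1)}{\sum_{k=1}^n\log c_k},
\]
and dividing by $q-1$ produces $s_*$ for $q>1$ and, via the sign flip turning $\liminf$ into $\limsup$, $s^*$ for $0<q<1$. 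Your reduction via Lemma \ref{lem:reduction} and your appeal to Proposition \ref{tauq} are the right framework, and your remark that the sign of $q-1$ exchanges $\liminf$ and $\limsup$ is the correct mechanism for \eqref{qge1} versus \eqref{qle1}; but the cardinality estimate at the heart of the argument must be replaced as above, and the detour through $\ldimloc$, $\udimloc$ and Theorem \ref{entropyaverages} then becomes unnecessary.
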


\begin{proof}
As in the proof of Lemma \ref{thm:moran_dim}, defining $\gamma_n=\min\{\diam(E_\jjj)\,:\,\jjj\in\Sigma_n\}$ and
$\QQ_n=\{E_\iii: \iii\in\Sigma_n\}$ guarantees the existence of a sequence $(\delta_n)_{n \in \N}$ such that the sequences $(\gamma_n)_{n \in \N}$ and $(\delta_n)_{n \in \N}$ satisfy \ref{F-smaller}--\ref{F-gammadeltalim}, and the collection $\{ \QQ_n \}_{n \in \N}$ is a general filtration for $E$. Furthermore, from the asymptotic homogeneity, it follows that
\[\lim_{n\rightarrow\infty}
\frac{\log\gamma_n}{\sum_{j=1}^n\log c_j}=1.
\]

To show \eqref{qge1}, we use Proposition \ref{tauq} and \ref{F-gammadeltalim} to calculate
\begin{align*}
  \tau_q(\mu,x)&=\lim_{r \downarrow 0} \liminf_{n \to \infty} \frac{\log\sum_{Q \in \QQ_n(x,r)} \mu(Q)^q}{\log \gamma_n}\\
  &=\lim_{r\downarrow 0}\liminf_{n\to\infty}\frac{-q\sum_{j=1}^n\log N_j +\left(\log\left(\frac{\#\QQ_n(x,r)}{\#\QQ_n}\right)+\sum_{j=1}^n\log N_j\right)}{\sum_{j=1}^n\log c_j}\\
  &=\liminf_{n\to\infty}\frac{(1-q)\log\sum_{j=1}^n\log N_j}{\sum_{j=1}^n\log c_j}\,.
\end{align*}
Here we have used the definition of $\mu$, the fact that $\#\QQ_n(x,r)/\#\QQ_n>c>0$ for some $c>0$ independent of $n$ (but allowed to depend on $x$ and $r$), and the identity
$\#\QQ_n = \prod_{j=1}^n N_j$. Since $q>1$ we have
$$\dim_q(\mu)=\frac{\tau_q(\mu)}{q-1}=\liminf_{n\to\infty}\frac{\sum_{j=1}^n\log N_j}{-\sum_{j=1}^n\log c_j}$$
finishing the proof of \eqref{qge1}.

If $0<q<1$, then the above calculation gives
\begin{equation*}
  \dim_q(\mu,x)=\dim_q(\mu)=\limsup_{n\to\infty}\frac{\sum_{j=1}^n\log N_j}{-\sum_{j=1}^n\log c_j}.
\end{equation*}
for all $x\in E$.
\end{proof}

\bibliographystyle{abbrv}
\bibliography{moran}

\end{document}